\documentclass[12pt]{article}

%
\usepackage{amsmath, amsthm, graphicx}
\usepackage{amstext}
\usepackage{amssymb}
\setlength{\textwidth}{6.5 in}
\setlength{\oddsidemargin}{-0 in}
\setlength{\textheight}{9.5 in}
\setlength{\topmargin}{-.75in}
\setlength{\parskip}{.2 in}
\setlength{\parindent}{0in}
\addtocounter{footnote}{-1}

\usepackage{graphicx, pifont, color, epsfig, epstopdf}


\pagestyle{myheadings} \markboth{Fixing numbers for
matroids}{Fixing numbers for
matroids}



\newtheorem{thm}{Theorem}[section] 
\newtheorem{prop}[thm]{Proposition}
\newtheorem{lemma}[thm]{Lemma}
\newtheorem{cor}[thm]{Corollary}

\theoremstyle{definition}

\newtheorem{ex}{Example}[section]

\begin{document}
 \title{\sffamily Fixing numbers for matroids
                     \footnotemark}
\author{
    \textsc{Gary Gordon} \\[0.25em]
    {\small\textit{Department of Mathematics,  Lafayette College
                  }} \\[-0.25em]
    {\small\textit{Easton,PA 18042, USA}} \\[-0.1em]
    {\small\texttt{gordong@lafayette.edu}}\\[.75em]
    \textsc{Jennifer McNulty} \\[0.25em]
    {\small\textit{Department of Mathematical Sciences, University
                   of Montana}} \\[-0.25em]
    {\small\textit{Missoula, MT 59812, USA}} \\[-0.1em]
    {\small\texttt{jenny.mcnulty@umontana.edu}}\\[.75em]
    \textsc{Nancy Ann Neudauer} \\[0.25em]
    {\small\textit{Department of Mathematics and Computer Science,
            Pacific University
                   }} \\[-0.25em]
    {\small\textit{Forest Grove, OR 97116, USA}} \\[-0.1em]
    {\small\texttt{nancy@pacificu.edu}}}

\maketitle
\renewcommand{\thefootnote}{*}
\footnotetext{1991 {\em AMS Subject Classification\/}:\ Primary
05B35, 05C80.}

\vspace*{-2em}
\begin{abstract}
\noindent
Motivated by work in graph theory, we define the fixing number  for a matroid.  We give upper and lower bounds for fixing numbers for a general matroid in terms of the size and maximum orbit size (under the action of the matroid automorphism group).  We prove the fixing numbers for the cycle matroid and bicircular matroid associated with 3-connected graphs are identical.  Many of these results have interpretations through permutation groups, and we make this connection explicit.

\end{abstract}
\thispagestyle{empty}
\section{Introduction}\label{s:intro}

The fixing number is an invariant that is associated with breaking symmetry in combinatorial objects.  The {\it
fixing number} was introduced by Harary in 2006 \cite{ErHar}, and, separately, by Boutin the same year (as the {\it determining number}) \cite{Boutin}.  This is the minimum number of vertices that need to be fixed  so that no symmetries remain.

While this graph invariant is relatively recent, this concept has appeared before in the study of permutation groups.  In particular, if a group $G$ of permutations acts faithfully on a set $\Gamma$, then the {\it base size} of $G$ is the smallest number of elements of $\Gamma$  whose pointwise stabilizer is the identity.  When $\Gamma$ is the collection of vertices of a graph, this gives the fixing number of the graph.  An interesting survey of recent (and not-so-recent) results on base size and some other related invariants can be found in {\it Base size, metric dimension and other invariants of groups and graphs}
\cite{BailCam}, which traces the history of this subject to the 19th century.

Our interest is in matroids; specifically, we define the fixing number $fix(M)$ for a matroid $M$.  From the perspective of permutation groups, this amounts to finding the base size of the automorphism group, which always acts faithfully on  the ground set of $M$.  In fact, some of the results we present here have direct analogues in the permutation group literature.

A closely related invariant, the {\it distinguishing number}, can also be extended from graphs to matroids. The distinguishing number of a graph, which was first introduced by Albertson and Collins in 1996 \cite{AlCol}, is the minimum number of colors needed to color the vertices in order to destroy all non-trivial symmetries of the graph. Note that coloring each element of a minimum size fixing set in a matroid $M$ with a different color, then coloring the rest of the elements of the matroid with a new color, we have the distinguishing number bounded above by $fix(M)+1$. We believe the distinguishing number deserves a thorough treatment of its own, but we will not do that here, concentrating solely on the fixing number for the remainder of this paper.

The paper is organized as follows.  In Section~\ref{s:def}, we give some of the basic definitions and results we will need.  In Section~\ref{s:bounds}, we present some basic examples and bounds on the fixing number of a matroid. Section~\ref{s:graphs} is devoted to two matroids associated with a graph: the cycle matroid and the bicircular matroid. Our main result here is that the fixing numbers for these two matroids are the same when $G$ is 3-connected (and large enough).

We believe matroid fixing numbers is a promising area for further research. For instance, it would be valuable to compute fixing numbers for other classes of matroids. It would also be of interest to improve the bounds given in Section~\ref{s:bounds}.

We thank Robert Bailey for useful discussions.

\section{Definitions}\label{s:def}

Throughout, we will let  $M$ be a matroid on the ground set $E$ with circuits $\mathcal{C}$.  Matroids can be defined in a variety of equivalent ways; See \cite{GM} or  \cite{JO2011} for a background on matroids.

Let $\mathcal{C}$ be the collection of circuits of a matroid $M$ on the ground set $E$. A {\it matroid automorphism} $\phi : E \rightarrow E$ is a bijection that preserves the circuits, that is,
$\phi(C) \in \mathcal{C}$ if and only if  $C \in \mathcal{C}$.
Of course, a matroid automorphism  also preserves the independent sets, bases, flats, rank, cocircuits, and all other matroid structure.

 A set of elements $S$ of a matroid is a {\textit{fixing set}} if the only automorphism that fixes $S$ pointwise is the identity.
 The {\textit{fixing number}} of a matroid  {$M$} is defined as the minimum size of a fixing set;
 {$$fix(M) = \min \{ r \:|\: M  \mbox{ has an } r\mbox{-element fixing set\}.}$$

We can also give a group--theoretic definition of the fixing number.  For a matroid $M$ and a subset $A \subseteq E$, define the {\it stabilizer of $A$}, $stab(A)$ to be the set of all automorphisms of $M$ that fix each element of $A$.  That is,
$$stab(A)=\{\sigma \in Aut(M) \mid \sigma(a)=a \mbox{ for all } a \in A\}.$$
Then $stab(A) =\bigcap_{a \in A}stab(a)$ is a subgroup of the automorphism group of $M$. Then  $fix(M) = \min_{A \subseteq E}\{|A| \mid stab(A) \mbox{ is trivial}\}$.

If a permutation group $G$ acts faithfully on a set $\Gamma$, a subset $A \subseteq \Gamma$ is a  {\it base} if $A$ is  a fixing set.  The {\it base size} of $G$ is the minimum size of any base, that is,  the smallest number of points in  $\Gamma$  whose pointwise stabilizer is the identity.  Then if $M$ is a matroid, we immediately see the base size of $Aut(M)$ acting on the ground set $E$ is the fixing number $fix(M)$, i.e., the fixing number $fix(M)$ is the minimum size of any base.  (The term {\it base} is a bit unfortunate here, since a {\it basis} for a matroid need not be related to a base as defined here.)  See Bailey and Cameron
 \cite{BailCam} for details on this approach to the subject.

Let $\mathcal{A}$ be the family of subsets $A$ of $E$ with $stab(A)$ trivial, that is, the collection of all fixing sets.  Then $E \in \mathcal{A}$, so the fixing number is well-defined.  Note that the family $\mathcal{A}$ is an order filter in the boolean lattice of subsets of $E$.  We do not explore the structure of this filter here, but concentrate only on the size of the smallest member of $\mathcal{A}$.  It would be interesting to determine the number of  $A \in \mathcal{A}$ with $stab(A)$ trivial.

Note that $fix(M)=fix(M^*)$ since a matroid and its dual have the same automorphism group. This (obvious) fact has an interesting consequence when we consider the cycle matroid of a planar graph in Section~\ref{s:graphs}. It is also immediate that $fix(M_1\oplus M_2)=fix(M_1)+fix(M_2)$ for the direct sum of matroids $M_1$ and $M_2$ (since $Aut(M_1\oplus M_2)=Aut(M_1)\times Aut(M_2)$). The converse of this is discussed in Example~\ref{E:P6}.

\section{Bounds}\label{s:bounds}
A matroid with trivial automorphism group obviously has fixing number 0. We give an example that has additional properties.

\begin{ex} \label{E:trivauto} Let $M$ be the rank 3 matroid in Fig.~\ref{F:trivauto}. Then $Aut(M)$ is trivial, so $M$ has fixing number 0.
\begin{figure}[h]
\begin{center}
\includegraphics[width=3in]{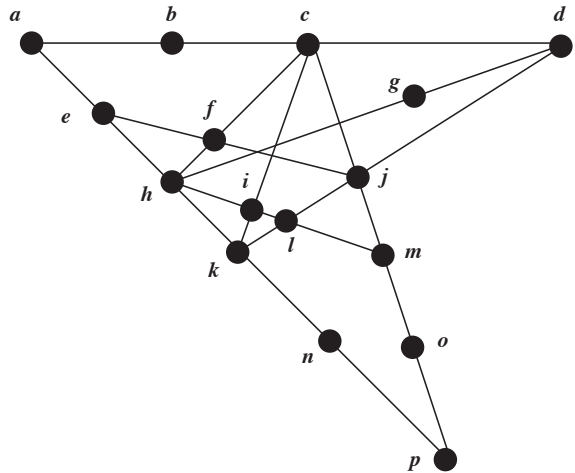}

\caption{A matroid with $M-x \not \cong M-y$ and $M/x \not \cong M/y$ for all pairs $x$ and $y$.}
\label{F:trivauto}
\end{center}
\end{figure}
$M$ has the additional property that, for all points $x$ and $y$, the deletions $M-x$ and $M-y$ are not isomorphic, and similarly for $M/x$ and $M/y$. One would expect that this property (which we interpret as a strong version of lack of symmetry) to predominate in the sense that a random matroid should have this property with probability approaching 1.

\end{ex}

We remark that Example~\ref{E:trivauto} has the following non-matroidal interpretation. Let $\mathcal{P}$ be a finite collection of points in the plane. For each point in $\mathcal{P}$ and for all $k \geq 2$, record the number of $k$-point lines that point is on, and call this the {\it address} of the point. For instance, in Fig.~\ref{F:trivauto}, the  point $h$ is on one 6-point line, one 4-point line, two 3-point lines, and three (trivial) 2-point lines. We  abbreviate the address as $643^22^3$. Then the requirement that all the deletions and contractions are pairwise non-isomorphic ensures that each point has a unique address. The smallest known examples have 13 points  \cite{wagon}.

At the other extreme, the uniform matroid $U_{r,n}$ has symmetry group $S_n$, so has  fixing number $n-1$, as large as possible. The automorphism group of the matroid obviously plays a central role here. We now give upper bounds on the size of the automorphism group of a matroid in terms of the maximum orbit size of a point, extending results of Boutin \cite{Boutin}, who obtains two different  upper bounds on the size of the automorphism group of a graph.  A lower bound on $|Aut(M)|$ in terms of $fix(M)$ appears in \cite{BailCam}.  These results also generalize to matroids:

\begin{thm}\label{T:bounds}  Suppose $M$ is a matroid on a ground set of size $n$, with fixing number $fix(M)=k$ and maximum orbit size $s$.
\begin{enumerate}
\item    $|Aut(M)| \leq (n)_k.$
\item   $|Aut(M)| \leq s^k.$
\item $2^k \leq |Aut(M)|$.
\end{enumerate}
\end{thm}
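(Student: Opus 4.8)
The plan is to derive all three inequalities from a single stabilizer chain built on a minimum-size fixing set. I would choose a fixing set $A = \{a_1, a_2, \ldots, a_k\}$ with $|A| = fix(M) = k$, fix an ordering of its elements, and set
$$G_0 = Aut(M), \qquad G_i = stab(\{a_1, \ldots, a_i\}) \text{ for } 1 \le i \le k,$$
so that $G_0 \ge G_1 \ge \cdots \ge G_k$. Since $A$ is a fixing set, $G_k = stab(A)$ is trivial, and since $G_i = stab(\{a_1,\ldots,a_i\})$ is the stabilizer of the single point $a_i$ inside $G_{i-1}$, the orbit--stabilizer theorem gives $[G_{i-1}:G_i] = |\mathcal{O}_i|$, where $\mathcal{O}_i$ is the orbit of $a_i$ under $G_{i-1}$. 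Telescoping the indices then yields the master identity
$$|Aut(M)| = \prod_{i=1}^{k}[G_{i-1}:G_i] = \prod_{i=1}^{k}|\mathcal{O}_i|.$$
Every bound in the theorem comes from controlling the factors $|\mathcal{O}_i|$.

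For part (1), I would observe that each element of $G_{i-1}$ fixes $a_1, \ldots, a_{i-1}$, so by injectivity of automorphisms the orbit $\mathcal{O}_i$ cannot meet $\{a_1, \ldots, a_{i-1}\}$; hence $|\mathcal{O}_i| \le n - (i-1)$, and the product is at most $n(n-1)\cdots(n-k+1) = (n)_k$. For part (2), $\mathcal{O}_i$ is contained in the orbit of $a_i$ under the full group $Aut(M)$, so $|\mathcal{O}_i| \le s$, and the product is at most $s^k$. These two parts are essentially bookkeeping once the master identity is in place; equivalently, they follow from noting that $\sigma \mapsto (\sigma(a_1), \ldots, \sigma(a_k))$ is injective precisely because $stab(A)$ is trivial.

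The part I expect to be the real obstacle is the lower bound (3), where minimality of $k$ must be exploited rather than just the fact that $A$ is a fixing set. Here the claim is that every index satisfies $[G_{i-1}:G_i] \ge 2$, i.e., no $a_i$ is already fixed by $G_{i-1}$. I would argue by contradiction: if $a_i$ were fixed by $G_{i-1} = stab(\{a_1, \ldots, a_{i-1}\})$, then any automorphism fixing $\{a_1,\ldots,a_{i-1}, a_{i+1}, \ldots, a_k\}$ pointwise lies in $G_{i-1}$ and hence also fixes $a_i$, so it fixes all of $A$ and is the identity. This would make $A \setminus \{a_i\}$ a fixing set of size $k-1$, contradicting $fix(M)=k$. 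Therefore each index is at least $2$, and the master identity gives $|Aut(M)| \ge 2^k$. The delicate point is exactly this redundancy argument, which is what converts minimality of the fixing number into a uniform factor-of-two lower bound along the chain; the upper bounds need only that $A$ is a fixing set of size $k$, but the lower bound genuinely needs that no smaller one exists.
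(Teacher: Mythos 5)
Your proof is correct and takes essentially the same approach as the paper: the paper handles parts 1 and 2 by counting possible images of a minimum fixing set (the injectivity observation you mention as an equivalent formulation), and proves part 3 with exactly your stabilizer chain $G_0 \geq G_1 \geq \cdots \geq G_k = \{e\}$. Your only real addition is spelling out why the chain is strictly decreasing --- the redundancy argument that a repeated stabilizer would yield a fixing set of size $k-1$ --- a step the paper asserts without proof, so your write-up is if anything slightly more complete.
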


\begin{proof}

First note that if $B$ is a fixing set with $|B|=k$, and two automorphisms $\phi$ and $\psi$ agree on $B$, then $\phi=\psi$. Thus, the number of distinct automorphisms is bounded above by the number of possible images of the set $B$. Parts 1 and 2 now follow.

For part 3, we follow \cite{BailCam}.  Let $B=\{b_1, b_2, \dots, b_k\}$ be a fixing set of minimum size.  Let $G_i=stab(\{b_1, b_2, \dots, b_i\})$ be the subgroup of $Aut(M)$ that fixes the first $i$ elements of $B$, where $G_0=Aut(M)$.  Then the sequence of subgroups $$Aut(M)=G_0 \geq G_1 \geq G_2 \geq \cdots \geq G_k = \{e\}$$
is strictly decreasing at each step, i.e., $G_{i+1}$ is a proper subgroup of $G_i$.  Thus $|G_i|\geq 2|G_{i+1}|$.  The lower bound on $Aut(M)$ now follows.

\end{proof}

Part 1 of Theorem~\ref{T:bounds} is sharp for uniform matroids, and parts 2 and 3 are sharp for matroids with trivial automorphism group. In general, it would be interesting to study which classes of matroids satisfy these bounds with equality.

  As an example where the bounds are not sharp, the Fano plane $F_7$ has automorphism group isomorphic to the simple group $PSL(2,7)$, with $168=7 \times 6 \times 4$ elements.  The reader can check $fix(F_7)=3$, so the bound in part 1 of Theorem~\ref{T:bounds} gives $|Aut(F_7)|\leq 7\times 6 \times 5=210.$  For part 2 of the theorem, note that this group is transitive on points (in fact, it is transitive on pairs), so there is only one orbit, with 7 points.  Thus we get $|Aut(F_7)|\leq 7^3=343.$   For $F_7$, part 3 gives a (rather poor) lower bound of $2^3=8$ for $|Aut(F_7)|$.

\begin{ex} \label{E:dc} 
The fixing number is not well-behaved under the matroid operations of deletion and contraction. For example, recall that $fix(M)=0$ for the matroid $M$ from Fig.~\ref{F:trivauto}. Let $M'$ be the matroid obtained from $M$ by adding a point {\it freely} without increasing rank (this is the matroid operation of {\it free extension}). Then $fix(M')=0$, but $fix(M'/x)$ is a rank 2 matroid with fixing number 16. So contracting a point (and, dually, deleting a point in $(M')^*$) can increase the fixing number rather dramatically.
\end{ex}

 It is clear that modifying this the matroid $M'$ in Example~\ref{E:dc}  can produce matroids whose fixing numbers increase by arbitrary amounts under deletion and/or contraction of a single point. Note that the automorphism groups for the minors $M-x$ or $M/x$ may be much larger than $Aut(M)$.

\subsection{Clones} There is a direct connection between the fixing number and  {\it clones} in matroids. Let $x, y \in E$ be elements of the ground set of a matroid $M$. If the map that interchanges $x$ and $y$ and fixes everything else is an automorphism of the matroid, we say $x$ and $y$ are {\it clones}. This induces an equivalence relations on $E$, partitioning $E$ into {\it clonal} classes.

In finding a minimum size fixing set for a matroid, it is clear that we must fix (at least) all but one member of each clonal class. The next result is immediate.

\begin{prop}\label{P:clone} Let $M$ be a matroid on a ground set of size $n$ with $m$ clonal classes. Then 
$$fix(M)\geq n-m.$$

\end{prop}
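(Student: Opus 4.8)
The plan is to reduce the global bound to a purely local statement about each clonal class and then sum. I would write the clonal classes as $P_1,\dots,P_m$ with $|P_i|=n_i$, so that $\sum_{i=1}^m n_i = n$, and let $B$ be an arbitrary fixing set. The whole proposition will follow from the single claim that $|B\cap P_i|\ge n_i-1$ for every $i$; this is the formal version of the remark preceding the statement that a fixing set must contain all but one element of each clonal class.

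To establish the claim I would argue by contradiction. Suppose $|B\cap P_i|\le n_i-2$ for some $i$. Then $P_i$ contains two distinct elements $x,y$, neither of which lies in $B$. Since $x$ and $y$ belong to the same clonal class they are clones, so the transposition $\tau=(x\,y)$ that swaps $x$ and $y$ and fixes every other element is a nontrivial automorphism of $M$. Because $\tau$ moves only $x$ and $y$, and $x,y\notin B$, it fixes every element of $B$, so $\tau\in stab(B)$ with $\tau\ne e$. This contradicts the assumption that $B$ is a fixing set, which requires $stab(B)$ to be trivial, and so proves the claim.

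Summing the claim over the $m$ classes then gives
$$|B| \ge \sum_{i=1}^m |B\cap P_i| \ge \sum_{i=1}^m (n_i-1) = n-m.$$
Since this bound holds for every fixing set, it holds in particular for one of minimum size, yielding $fix(M)\ge n-m$.

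The only point that demands any care — and it is slight — is the inference that two elements lying in a \emph{common} clonal class are in fact clones, i.e. that their pairwise transposition is an automorphism. This relies precisely on the fact, asserted when clonal classes were defined, that being clones is a genuine equivalence relation, so that membership in one class really does guarantee the relevant transposition is an automorphism; without transitivity one would only know each element is a clone of some neighbor, not of an arbitrary other class member. Everything else is bookkeeping: the partition of $E$ decomposes the count class by class, and the minimality of $fix(M)$ converts the bound on an arbitrary fixing set into a bound on the invariant.
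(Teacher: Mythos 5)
Your proof is correct and is essentially the paper's own argument: the paper declares the result ``immediate'' from the preceding remark that a fixing set must contain all but one member of each clonal class, and your class-by-class counting with the transposition-swap contradiction is exactly the formalization of that remark. Your side observation about transitivity is also sound, since for clones $(x\,y)$ and $(y\,z)$ the conjugate $(x\,y)(y\,z)(x\,y)=(x\,z)$ is again an automorphism, so the clone relation is indeed an equivalence relation as the paper asserts.
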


This lower bound is exact for uniform matroids (where there is one clonal class) and matroids with trivial automorphism group (where each element is in its own class). It will also be useful for our treatment of transversal matroids.

It is straightforward to show that two elements are clones precisely when they are in the same cyclic flats of the matroid. This observation is useful for finding clones. 

\begin{ex}\label{E:vamos}
We compute the fixing number of the V\'{a}mos cube $V_8$ (see Fig.~\ref{F:vamos}). This matroid is not representable over any field, and is one of the minimal such examples.

\begin{figure}[h]
\begin{center}
\includegraphics[width=2in]{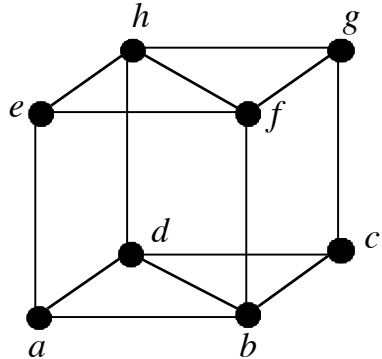}

\caption{V\'{a}mos matroid $V_8$.}
\label{F:vamos}
\end{center}
\end{figure}

There are five 4-point circuits in $V_8 : \{a,b,e,f\}, \{b,c,f,g\}, \{c,d,g,h\}, \{a,d,e,h\}$ and $\{b,d,f,h\}$. These five sets are precisely the non-trivial cyclic flats of $V_8$. Then there are four clonal classes: $\{a,e\}, \{b,f\}, \{c,g\}$ and $\{d,h\}$ (this is direct from the cyclic flat characterization). Then Prop.~\ref{P:clone} gives $fix(V_8)\geq 4$. It is easy to see this is best possible: $fix(V_8)=4$.

\end{ex}

For the V\'{a}mos matroid, the automorphism group is a non-abelian group of order 64. In addition to the $(\mathbb{Z}_2)^4$ action induced by swaps of clones, there is a dihedral $D_2$ action, so we can write $Aut(V_8)\cong (\mathbb{Z}_2)^4 \rtimes D_2$. We list the order of the subgroups in a sequence of stabilizers of minimum length, demonstrating $fix(V_8)=4$. 

\begin{center}
\begin{tabular}{c||c|c|c|c|c} 
Group & $Aut(V_8)$ & $stab(\{a\})$ & $stab(\{a,b\})$ & $stab(\{a,b,c\})$ & $stab(\{a,b,c,d\})$ \\ \hline
Order & 64 & 16 & 4 & 2 & 1
\end{tabular}
\end{center}

\subsection{Transversal matroids}\label{s:trans}
Transversal matroids are an important class of matroids; see  \cite{Brualdi-White} as a reference. We define them as follows:  Let  $G$ be a bipartite graph with vertex bipartition $V=X\cup Y$.  The {\it transversal matroid} $M_G$ is defined on the ground set $X$. A subset $I \subseteq X$ will be independent in $M_G$ precisely when it can be matched into $Y$ in the bipartite graph. It is a standard exercise to prove that the sets that can be matched satisfy the independent set properties that characterize matroids. We will also assume $|Y|=r(M)$, the rank of the matroid (it is always possible to find a bipartite graph with this property to represent a transversal matroid). For $x \in X$, we let $R(x)\subseteq Y$ be the elements of $Y$ joined to $x$ in $G$.

Alternatively, for $y \in Y$, let $R^{-1}(y)\subseteq X$ be the elements of $X$ joined to $y$ in the bipartite graph $G$. Then the independent sets are the systems of distinct representatives ({\it partial transversals})  associated with the family of subsets $\{R^{-1}(y)\}_{y \in Y}$. Different bipartite graphs may give the same matroid. A bipartite graph $G$ is a {\it maximal presentation} for the transversal matroid $M_G$ if adding any edges to $G$ changes the matroid.

Let $M_G$ be a transversal matroid of rank $r.$ Then a maximal presentation can be used to find  the clonal classes of $M_G$. 

\begin{lemma}\label{P:transclone}
Let $M_G$ be a transversal matroid with $G$ a maximal presentation. Then $x_1$ and $x_2$ are clones if and only if $R(x_1)=R(x_2)$.
\end{lemma}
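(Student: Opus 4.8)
The plan is to prove both directions of the biconditional, using the characterization (stated just before the lemma) that two elements are clones precisely when they lie in the same cyclic flats, together with the combinatorial structure of a maximal presentation. The easier direction is to show that $R(x_1)=R(x_2)$ implies $x_1,x_2$ are clones. Here I would argue directly from the matching definition: if $x_1$ and $x_2$ are joined to exactly the same subset of $Y$, then the transposition $\tau$ swapping $x_1$ and $x_2$ (and fixing all other ground-set elements) carries any partial transversal to a partial transversal. Indeed, a set $I\subseteq X$ is independent iff it can be matched into $Y$, and since $x_1,x_2$ have identical neighborhoods, $\tau(I)$ admits a matching exactly when $I$ does (replace the edge used by $x_1$ with the corresponding edge at $x_2$, and vice versa). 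Hence $\tau$ preserves independent sets, so it is a matroid automorphism, and $x_1,x_2$ are clones. This direction does not even require maximality of the presentation.

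The reverse direction — clones imply $R(x_1)=R(x_2)$ — is where maximality of the presentation is essential, and I expect this to be the main obstacle. The point is that without maximality the neighborhoods of two clones can differ (the presentation can be ``missing'' edges), so we genuinely need the characterization of a maximal presentation. The standard fact I would invoke is the Mason/Bondy description of maximal presentations: in a maximal presentation, $x$ is joined to $y$ if and only if adding $x$ back is consistent with the cyclic-flat (or closure) structure associated to $y$. Concretely, I plan to use the characterization that in a maximal presentation $R(x)=\{y\in Y : x\notin \operatorname{cl}(F_y)\}$ for appropriate flats $F_y$ determined by the matroid, so that the neighborhood $R(x)$ is an invariant of $x$ determined entirely by which flats (equivalently, which cyclic flats) contain $x$.

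Given that invariant description, the reverse direction becomes a short deduction. If $x_1$ and $x_2$ are clones, then by the cyclic-flat characterization preceding the lemma they lie in exactly the same cyclic flats of $M_G$. Since a maximal presentation encodes $R(x)$ purely in terms of the flats containing $x$, elements lying in the same cyclic flats must have the same neighborhood, giving $R(x_1)=R(x_2)$. Thus the heart of the argument is translating ``same cyclic flats'' into ``same neighborhood in a maximal presentation,'' which is exactly what maximality buys us.

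The main difficulty, then, is citing and correctly applying the right structural description of a maximal presentation; once that is in hand, both implications are routine. I would be careful to state precisely which known result about maximal presentations I am using (e.g.\ that $y\in R(x)$ iff $x$ is not in the closure of the set of elements \emph{not} adjacent to $y$), since the clean equivalence depends on having the presentation maximal rather than arbitrary. If a self-contained argument is preferred over citing the structure theorem, I would instead show directly that in a maximal presentation one cannot have $R(x_1)\subsetneq R(x_2)$ for clones: were some $y\in R(x_2)\setminus R(x_1)$, the automorphism swapping $x_1,x_2$ would force the edge $x_1y$ to be addable without changing the matroid, contradicting maximality.
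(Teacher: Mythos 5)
Your proposal is correct and follows the same two-direction decomposition as the paper's proof. The easy direction (equal neighborhoods imply clones) is identical: the paper likewise observes that the swap of $x_1$ and $x_2$ is a bipartite-graph automorphism of $G$ and hence induces a matroid automorphism; your matching-by-matching verification just spells this out, and you are right that maximality is not needed there. For the forward direction the paper argues in one step: if $x_1,x_2$ are clones, then replacing $R(x_1)$ and $R(x_2)$ by $R(x_1)\cup R(x_2)$ yields a presentation of the same matroid (justified only by a parenthetical remark about cyclic flats), so maximality forces $R(x_1)=R(x_2)$ --- this is essentially the ``self-contained'' fallback you sketch in your final paragraph, and it carries the same burden of justification that you identify. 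Your primary route instead invokes the Mason/Bondy structure theorem for maximal presentations, namely that each set $E\setminus R^{-1}(y)$ is a cyclic flat $F_y$, so that $R(x)=\{y : x\notin F_y\}$ is determined entirely by which cyclic flats contain $x$; combined with the clone/cyclic-flat characterization stated just before the lemma, the conclusion is immediate. The two routes rest on the same underlying facts about maximal presentations, but yours is anchored to a citable structure theorem and makes the role of maximality completely explicit, whereas the paper's union argument is terser and leaves its key claim (that the enlarged bipartite graph presents the same matroid) essentially asserted. Either writeup is acceptable; if anything, your primary version is the more rigorous of the two.
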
 
\begin{proof}
Suppose $x_1$ and $x_2$ are clones. Then replacing $R(x_1)$ and $R(x_2)$ by $R(x_1)\cup R(x_2)$ does not change the matroid (specifically, it does not change the cyclic flats that contain $x_1$ or $x_2$). Since the presentation is assumed to be maximal, we must have $R(x_1)=R(x_2)$.

For the converse, it is clear that if $R(x_1)=R(x_2)$, then there is a {\it graph} automorphism of $G$ that swaps $x_1$ and $x_2$, fixing everything else. This bipartite graph automorphism induces a matroid automorphism which also swaps these two elements, rising everything else.
\end{proof}

Geometrically, one can use the bipartite graph $G$ to construct an affine diagram for a transversal matroid. Details are given in \cite{GM}. In this context, the points in a clonal class are placed freely on the face of a simplex. 

\begin{ex}\label{E:P6}
Consider the matroid $P_6$, shown in Fig.~\ref{F:P6}. ($P_6$ is one of the excluded minors for representability over $GF(4)$.)

\begin{figure}[h]
\begin{center}
\includegraphics[width=1.75in]{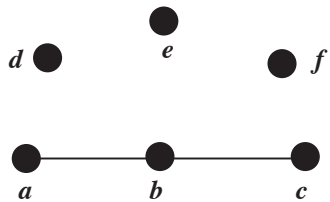}
\caption{The matroid $P_6$.}
\label{F:P6}
\end{center}
\end{figure}
This matroid is transversal, with $X=\{a,b,c,d,e,f\}$ and $Y=\{1,2,3\}$, where  $R^{-1}(1)=R^{-1}(2)=\{a,b,c,d,e,f\},$ and $R^{-1}(3)=\{d,e,f\}$ is a maximal presentation. Then $R(a)=R(b)=R(c)=\{1,2\}$, and $R(d)=R(e)=R(f)=\{1,2,3\}$. This gives us two clone classes: $\{a,b,c\}$ and $\{d,e,f\}$. Hence, $fix(P_6) \geq 4$.  It is easy to check this is best possible, so $fix(P_6)=4$.

Note that, in this case, the clone classes determine the matroid automorphism group: $Aut(P_6)\cong S_3 \times S_3$, the direct product of two symmetric groups. Thus, it is possible for the automorphism group of a matroid to be a direct product when the matroid is not a direct sum.
\end{ex}

Applying Prop.~\ref{P:clone} gives us a lower bound on the fixing number of a transversal matroid.

\begin{cor}\label{C:transclone}
Let $M_G$ be a transversal matroid with maximal presentation $G$. Let $X=B_1 \cup B_2 \cup \cdots \cup B_k$ be a partition of $X$ with the property $R(x_1)=R(x_2)$ iff $x_1$ and $x_2$ are in the same block $B_i$ of the partition. Then 
$$fix(M_G)\geq |X|-k.$$
\end{cor}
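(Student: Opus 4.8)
The plan is to derive this as an immediate consequence of the two preceding results: the clone characterization for transversal matroids (Lemma~\ref{P:transclone}) and the general clonal lower bound (Prop.~\ref{P:clone}). The essential observation is that the partition $\{B_1,\dots,B_k\}$ given in the hypothesis is exactly the partition of $X$ into clonal classes.

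First I would note that the hypothesis defines the blocks $B_i$ via the equivalence relation $x_1 \sim x_2 \Longleftrightarrow R(x_1)=R(x_2)$. Because $G$ is assumed to be a maximal presentation, Lemma~\ref{P:transclone} applies and tells us that $R(x_1)=R(x_2)$ holds precisely when $x_1$ and $x_2$ are clones. Thus $\sim$ coincides with the ``is a clone of'' relation, and the blocks $B_1,\dots,B_k$ are exactly the clonal classes of $M_G$; in particular the number of clonal classes is $m=k$.

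Then I would simply feed this identification into Prop.~\ref{P:clone}. The ground set of $M_G$ is $X$, so $n=|X|$, and by the previous step $m=k$, which yields $fix(M_G)\ge n-m = |X|-k$. There is essentially no obstacle here, since all the content is carried inside Lemma~\ref{P:transclone}, where the maximality of the presentation is used in an essential way. The only point that needs care is confirming that the stated hypothesis singles out the clonal partition itself, rather than some coarser or finer partition of $X$; this holds precisely because the defining relation $R(x_1)=R(x_2)$ matches the clone relation exactly---again a direct consequence of maximality.
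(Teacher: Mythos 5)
Your proposal is correct and matches the paper's intended argument exactly: the corollary is derived by identifying the blocks $B_i$ with the clonal classes via Lemma~\ref{P:transclone} (using maximality of the presentation) and then invoking the general clonal bound of Prop.~\ref{P:clone} with $n=|X|$ and $m=k$. Nothing further is needed.
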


Cor.~\ref{C:transclone} may give very poor lower bounds for fixing numbers of transversal matroids. Consider the bipartite incidence graph $G_{n,k}$, where $Y=[n]$ and $X$ is the collection of all $k$-subsets of $[n]$. (When $k=2$, this is the bicircular matroid associated with the complete graph $K_n$ -- see Sec.~\ref{s:graphs}.)  The transversal matroid $M_{n,k}$ is simply the collection of partial transversals of the family of $k$-subsets of $n$ (which form a well-studied uniform hypergraph).

\begin{prop}\label{P:hyper}
Let $M_{n,k}$ be the transversal matroid associated with the family of all $k$-subsets of $[n]$. Then $M_{n,k}$ is a uniform matroid if and only if $k=1$ or $n-k \leq 2$.
\end{prop}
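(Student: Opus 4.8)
The plan is to compute the rank of $M_{n,k}$ and then decide uniformity by locating its smallest dependent set. Recall that each ground-set element is a $k$-subset $A$ of $[n]$ with $R(A)=A$, so the ground set has $\binom{n}{k}$ elements. The case $k=n$ is degenerate: the ground set is the single set $[n]$, which is independent, so $M_{n,n}\cong U_{1,1}$ is trivially uniform (and $n-k=0\le 2$), so I set this aside. For $1\le k\le n-1$ I would first show the rank equals $n$. Since $\binom{n}{k}\ge n$ in this range, Hall's condition for saturating $Y=[n]$ reads $\binom{n}{k}-\binom{n-t}{k}\ge t$ for all $t\le n$; the difference of consecutive values of $\binom{n}{k}-\binom{n-t}{k}-t$ is $\binom{n-t}{k-1}-1$, so this quantity is unimodal in $t$ with nonnegative values at both endpoints ($0$ at $t=0$ and $\binom{n}{k}-n\ge 0$ at $t=n$), giving rank $n$. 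Consequently $M_{n,k}$ is uniform exactly when no dependent set has size $\le n$.

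Next I would characterize small dependent sets via Hall's theorem: a family $\mathcal{F}$ of $k$-subsets is dependent iff some subfamily has union smaller than its cardinality. The cheapest way to build such a family is to place $u+1$ distinct $k$-subsets inside a fixed $u$-element subset $U\subseteq[n]$; this is possible precisely when $\binom{u}{k}\ge u+1$, and the resulting dependent set has size $u+1$ and fits in $[n]$ iff $u\le n-1$. A minimality argument (any dependent family of size $m$ lies in its own union of size $\le m-1$, forcing $\binom{m-1}{k}\ge m$) shows these are the smallest dependent sets. Hence a dependent set of size $\le n$ exists iff $\binom{u}{k}\ge u+1$ for some $u\le n-1$. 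Because $\binom{u}{k}-u$ is nondecreasing for $u\ge k-1$ (its increment is $\binom{u}{k-1}-1$), this happens iff $\binom{n-1}{k}\ge n$. Therefore, for $1\le k\le n-1$, the matroid $M_{n,k}$ is uniform if and only if $\binom{n-1}{k}<n$.

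It remains to solve the inequality $\binom{n-1}{k}<n$. Writing $m=n-1$, the endpoints $k=1$, $k=m-1$, and $k=m$ give $\binom{m}{k}\in\{m,m,1\}$, all less than $m+1=n$, corresponding to $k\in\{1,n-2,n-1\}$. For $2\le k\le m-2$ (which forces $m\ge 4$, i.e.\ $n\ge 5$), unimodality of the binomial coefficients gives $\binom{n-1}{k}\ge\binom{n-1}{2}=\tfrac{(n-1)(n-2)}{2}\ge n$, so the inequality fails and $M_{n,k}$ is not uniform. Combining these ranges with the degenerate case $k=n$, the uniform instances are exactly $k\in\{1,n-2,n-1,n\}$, that is, $k=1$ or $n-k\le 2$, as claimed.

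The main obstacle is the middle step: correctly translating failure of uniformity into the existence of a short circuit through Hall's theorem, and in particular verifying both that the extremal dependent set genuinely fits inside $[n]$ (the constraint $u\le n-1$) and that the monotonicity of $\binom{u}{k}-u$ collapses the existence of some admissible $u$ to the single inequality $\binom{n-1}{k}\ge n$. The boundary cases $k\in\{1,n-1,n\}$, where $M_{n,k}$ is in fact free, also warrant a quick separate sanity check.
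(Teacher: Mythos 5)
Your proof is correct, but it takes a genuinely different route from the paper's. The paper argues by direct case analysis: for $k=1$, $k=n-1$, and $k=n$ the matroid is a boolean algebra or a single point; for $k=n-2$ it checks (with details omitted) that every collection of $n$ of the $(n-2)$-subsets has a transversal, giving $U_{n,n(n-1)/2}$; and for $1<k<n-2$ it asserts (again omitting details) that one can construct $n$ of the $k$-subsets with no transversal. You instead establish a single quantitative criterion: after verifying Hall's condition saturating $[n]$ (via a discrete concavity argument) to get rank $n$ for all $1\le k\le n-1$, you show the smallest dependent sets are exactly $u+1$ distinct $k$-subsets packed inside a $u$-set, so a dependent set of size at most $n$ exists iff $\binom{u}{k}\ge u+1$ for some $u\le n-1$, which by monotonicity of $\binom{u}{k}-u$ collapses to $\binom{n-1}{k}\ge n$; solving that inequality recovers exactly the claimed cases. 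Your approach supplies full rigor precisely where the paper says ``straightforward to check'' and ``we omit the details,'' treats all values of $k$ uniformly, and yields extra structural information (the rank and the minimum circuit size of $M_{n,k}$) as byproducts; the paper's approach is shorter and avoids the rank computation entirely. One wording caveat: your parenthetical ``any dependent family of size $m$ lies in its own union of size $\le m-1$'' is literally false for non-minimal dependent families (a circuit together with disjoint extra sets is a counterexample); the claim must be applied to a minimal (equivalently, a smallest) dependent family, whose Hall-violating subfamily is, by minimality, the whole family. Since this is plainly what your ``minimality argument'' intends, the proof stands as written once that phrase is tightened.
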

\begin{proof}
If $k=1$, then $M_{n,1}=U_{n,n}$ is a boolean algebra. If $k=n$, there is only one element in the matroid.  If $k=n-1$, then $M_{n,n-1}$ is also a boolean algebra. When $k=n-2$, it is straightforward to check every collection of $n$ of the $k$-subsets has a transversal, so every subset of size $n$ in $M_{n,k}$ is a basis. This gives $M_{n,n-2}\cong U_{n,n(n-1)/2}$. 

When $1<k<n-2$, one can construct a collection of $n$ of the $k$-subsets that do not have a transversal. Thus, there are collections of $n$ elements of the matroid $M_{n,k}$ which do not  form a basis in the matroid, so $M_{n,k}$ is not a uniform matroid. We omit the details.
\end{proof}

When $n-k>2$ (and $k>1$), the transversal matroid $M_{n,k}$ has trivial clone classes, i.e., each class has only one element. Thus, the lower bound given in Cor.~\ref{C:transclone} is trivial. Determining the fixing numbers for these matroids is somewhat technical; see \cite{CGGMP}  and \cite{Maund} for bounds, and Theorems 2.22 and 2.25 of \cite{BailCam} for a summary of what is known for these matroids (where they are considered via the action of $S_n$ on $k$-subsets of $[n]$).

We point out the similarity between Proposition~\ref{P:hyper} and Theorem 3.8 of \cite{LRM}, which describes precisely which uniform matroids are bicircular. Bicircular matroids are transversal; they are considered here in Sec.~\ref{s:graphs}.

When $M$ is a simple binary matroid, we can bound $fix(M)$ in terms of the rank.

\begin{thm}\label{T:bin} If $M$ is a simple binary matroid of rank $r$, then $fix(M) \leq r$.
\end{thm}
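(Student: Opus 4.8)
The plan is to exploit the standard fact that a simple binary matroid of rank $r$ embeds as a set of points in the projective geometry $PG(r-1,2)$, equivalently as a subset $E$ of the nonzero vectors of the vector space $\mathbb{F}_2^r$, with no two elements equal (simplicity) and with circuits recorded as minimal linearly dependent subsets. Every matroid automorphism of $M$ is induced by a permutation of $E$ preserving linear independence, so the automorphism group $Aut(M)$ sits inside the group of permutations of $E$ that respect the linear structure. My goal is to produce a fixing set of size at most $r$ by choosing elements whose coordinate vectors span $\mathbb{F}_2^r$, and then arguing that once a spanning set is pointwise fixed, no nontrivial automorphism survives.

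First I would select a basis $B = \{b_1, \dots, b_r\}$ of $M$; since $M$ has rank $r$, such a basis exists and has exactly $r$ elements, and the corresponding vectors form a linear basis of $\mathbb{F}_2^r$. I claim $B$ is a fixing set, which immediately gives $fix(M)\le |B| = r$. To prove the claim, suppose $\phi \in Aut(M)$ fixes each $b_i$. The key point is that over $\mathbb{F}_2$ every element of $E$ is determined by which subsets of $B$ it depends on: because the field is $GF(2)$, each vector $v \in \mathbb{F}_2^r$ has a unique expression $v = \sum_{i \in S} b_i$ for some $S \subseteq \{1,\dots,r\}$, and this $S$ is exactly the unique circuit-or-independent relation $\{v\}\cup\{b_i : i \in S\}$ recorded by the matroid (the fundamental circuit of $v$ with respect to $B$ is $\{v\}\cup\{b_i: i\in S\}$). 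An automorphism fixing $B$ pointwise must preserve this fundamental circuit and must send $\phi(v)$ to an element with the same dependence on the fixed basis elements, i.e.\ with the same support $S$. By uniqueness of binary representation, the only element of $E$ with fundamental circuit $\{v\}\cup\{b_i:i\in S\}$ is $v$ itself, so $\phi(v)=v$ for all $v\in E$, forcing $\phi$ to be the identity.

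The crux of the argument, and the step I would be most careful about, is the uniqueness claim: I must show that fixing the basis $B$ pointwise pins down every other element \emph{combinatorially}, using only matroid data (fundamental circuits), not the vector labels themselves—since an abstract matroid automorphism need not a priori respect any chosen coordinatization. The binary hypothesis is exactly what makes this work: over $GF(2)$ the fundamental circuit of $x$ relative to $B$ is $\{x\}\cup B_x$ where $B_x$ is the support of $x$, and two simple binary elements with the same support are equal, so distinct elements of $E$ have distinct fundamental circuits with respect to $B$. A matroid automorphism fixing $B$ maps the fundamental circuit of $x$ to the fundamental circuit of $\phi(x)$ while fixing every $b_i \in B$, hence $\phi(x)$ has the same support as $x$ and therefore $\phi(x)=x$. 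I would state this support-uniqueness lemma explicitly (it fails over larger fields, where scalar multiples share the same support, which is why the bound is special to the binary case), and then the theorem follows at once.
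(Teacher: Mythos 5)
Your proof is correct and takes essentially the same route as the paper: fix a basis $B$ of $M$ and use fundamental circuits to show that any automorphism fixing $B$ pointwise must fix every element $e \in E-B$, so $B$ is a fixing set of size $r$. The only difference is one of detail, not of approach: the paper leaves the key uniqueness step (that no element other than $e$ can complete $C-e$ to a circuit) implicit, while you justify it explicitly via the $GF(2)$ support argument, which is exactly where simplicity and the binary hypothesis are needed.
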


\begin{proof}
Let $M$ be a simple binary matroid of rank $r$ and let $B$ be a basis of $M$. We claim $B$ is a fixing set. For all $e \in E-B$, there is a unique circuit $C$ such that $e \in C \subseteq B\cup e$ (this circuit is the {\it fundamental} (also called {\it basic}) circuit determined by $B$ and $e$). Let $\phi$ be an automorphism of $M$ that preserves $B$ pointwise. Then $\phi$ preserves the elements of $C-e$ pointwise and, since $\phi$ preserves circuits, it follows that $\phi(e)=e$.

\end{proof}

Combining this result with part 1 of Theorem~\ref{T:bounds} gives the following corollary.

\begin{cor} If $M$ is a  simple binary matroid of size $n$
and rank $r$, then  $|Aut(M)| \leq (n)_r$.
\end{cor}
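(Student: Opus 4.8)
The plan is to chain together the two results the corollary explicitly cites, supplemented by an elementary monotonicity property of the falling factorial. First I would apply Theorem~\ref{T:bin}, which guarantees that a simple binary matroid of rank $r$ satisfies $fix(M) \leq r$; writing $k = fix(M)$, this gives $k \leq r$. Next I would invoke part 1 of Theorem~\ref{T:bounds}, which states $|Aut(M)| \leq (n)_k$. Taken together, these yield $|Aut(M)| \leq (n)_k$ with $k \leq r$, so the only remaining task is to replace the exponent $k$ by the (possibly larger) exponent $r$.

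The remaining step is to verify that $(n)_k \leq (n)_r$ whenever $k \leq r$. Writing the falling factorial as $(n)_r = (n)_k \cdot (n-k)(n-k-1)\cdots(n-r+1)$, I would observe that each factor in the trailing product is an integer of size at least $n - r + 1$. Since a matroid always has rank at most its ground-set size, $r \leq n$, so $n - r + 1 \geq 1$; hence every trailing factor is at least $1$ and their product is at least $1$. Therefore $(n)_k \leq (n)_r$, and combining this with the bound from the previous paragraph gives $|Aut(M)| \leq (n)_r$, as claimed.

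I expect no genuine obstacle here, since the argument amounts to substituting one established bound into another. The single point requiring any care is confirming that $(n)_k$ is nondecreasing in $k$ over the relevant range, and this rests entirely on the inequality $r \leq n$, which holds automatically for matroids. Consequently the proof is short, and the corollary follows directly from Theorems~\ref{T:bounds} and~\ref{T:bin}.
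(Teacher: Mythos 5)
Your proof is correct and follows exactly the paper's approach: the paper derives this corollary by ``combining'' Theorem~\ref{T:bin} (which gives $fix(M)\leq r$) with part 1 of Theorem~\ref{T:bounds} (which gives $|Aut(M)|\leq (n)_{fix(M)}$), precisely as you do. The only difference is that you spell out the monotonicity $(n)_k \leq (n)_r$ for $k \leq r \leq n$, which the paper leaves implicit.
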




The Fano plane shows that the bounds in Theorem~\ref{T:bin} are sharp.  In fact, it is straightforward to show the projective geometry $PG(r-1,2)$ has the fixing number $r$.

\section{Cycle  and  bicircular matroids}\label{s:graphs}

Two matroids  commonly defined on the edges of a graph are the  cycle matroid and
the bicircular matroid.  While
the fixing number of a graph is concerned with fixing the vertices of the graph, 
our goal with these two matroids is fixing edges.  
What is the connection between these two automorphism groups? 


For a graph, $G$,  the {\it cycle matroid} of $G$, denoted $M(G)$, is 
defined on the edge set of the graph where the cycles of the graph are the circuits of this matroid.
The {\it
bicircular matroid} of $G$ is the matroid  $B(G)$ defined on the same edge set
whose circuits are the subgraphs which are the  {\it bicycles} of $G$, i.e., subdivisions of one of the following
graphs:  
two loops incident to the same vertex,
 two loops joined by an edge, or
 three edges joining the same pair of vertices.
 We note that bicircular matroids are transversal.

\begin{figure}[h]

\centerline{\psfig{figure=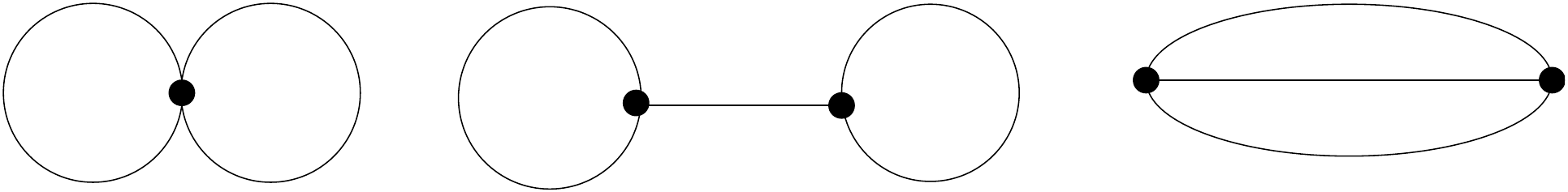,height=0.5in}}
\caption{Bicycles of $G$ \label{fig:bicycles}}
\end{figure}




We approach the automorphism groups and fixing numbers via the
cocircuits of each matroid.
We assume $G$ is a connected graph. For the cycle matroid $M(G)$, a subset $C^*$ of  edges is a cocircuit if 
it forms  a minimal cutset of the graph.
For the bicircular matroid $B(G)$, a subset $C^*$ of
the edges of $G$ is a  cocircuit  if the spanning
subgraph $E- C^*$ has exactly one tree component $T$ and each
edge in $C^*$ is incident with at least one vertex of $T$.  In other
words, $C^*$ is a minimal set of edges such that $E - C^*$ has
exactly one tree component.

\begin{ex}\label{E:autoex1} To see that the cycle and bicircular matroids can have different fixing numbers, consider the theta graph $G$ in Figure~\ref{f:graph}.  Here $fix(M(G))=4$, but $B(G)\cong U_{6,7}$, so $fix(B(G))=6$.  Note that the three automorphism groups are distinct: $Aut(G) \cong D_2$,  
$Aut(M(G)) \cong (S_3 \times S_3) \times \mathbb{Z}_2$, and $Aut(B(G)) \cong S_7$.

\begin{figure}[h]
\begin{center}
\includegraphics[width=1.6in]{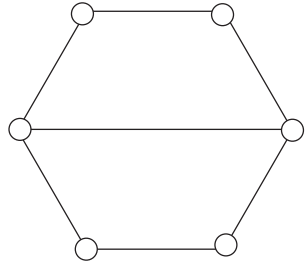}
\caption{Graph with $fix(M(G))\neq fix(B(G))$. \label{f:graph}}
\end{center}
\end{figure}

\end{ex}

If $G$ is a planar graph with (planar) dual $G^*$, then $M^*(G)=M(G^*)$, i.e., the dual of the cycle matroid $M(G)$ is the cycle matroid of the graphic dual $G^*$. Since $fix(M)=fix(M^*)$, the next result is immediate.

\begin{prop}\label{P:planar} Let $G$ be a planar graph, with dual graph $G^*$. Then $fix(M(G))=fix(M(G^*))$.
\end{prop}

Thus, for instance, if $I$ is the icosahedron and $D$ the dodecahedron, we have $fix(M(I))=fix(M(D))$. One can show both of these fixing numbers are 2. By Theorem~\ref{T:autogps} (1), the automorphism group of the cycle matroid for both of these Platonic solids is $A_5\times \mathbb{Z}_2$, so these highly symmetric matroids have large automorphism groups, but small fixing numbers.

We will need the following lemma, which is due to Matthews \cite{LRM}.

\begin{lemma}[Proposition 2.4 \cite{LRM}]
\label{th:matthews}
Let $G$ be a connected graph with more than one edge which is not a cycle. Then $B(G)$ is a
connected matroid if and only if $G$ has no vertices of degree 1. 
\end{lemma}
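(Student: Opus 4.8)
The plan is to use the standard characterization that a matroid is connected precisely when every pair of its elements lies in a common circuit, together with the fact that the relation ``$e=f$, or some circuit contains both $e$ and $f$'' is an equivalence relation on the ground set whose classes are the connected components of the matroid. Since the circuits of $B(G)$ are exactly the bicycles, it suffices to understand when two edges of $G$ lie in a common bicycle. I will also use the elementary structural observation that a subgraph $H\subseteq G$ is a bicycle if and only if $H$ is connected, has no vertex of degree $1$, and satisfies $|E(H)|=|V(H)|+1$ (equivalently, has exactly two independent cycles); suppressing the degree-$2$ vertices of such an $H$ yields one of the three topological types in the definition. Throughout, note that the standing hypotheses (connected, more than one edge, not a cycle, minimum degree $\ge 2$) force $|E(G)|>|V(G)|$, so $G$ carries at least two independent cycles.

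For the easy direction I would argue the contrapositive. If $G$ has a vertex $v$ of degree $1$ with incident edge $e$, then since every vertex of a bicycle (and of any subdivision of one) has degree at least $2$, the edge $e$ can belong to no bicycle. Hence $e$ lies in no circuit of $B(G)$; that is, $e$ is a coloop. As $G$ has more than one edge, $B(G)$ has at least two elements, and a matroid with a coloop and at least two elements is disconnected. This shows that connectedness of $B(G)$ forces $G$ to have no degree-$1$ vertex.

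For the converse, assume $G$ is connected, not a cycle, with minimum degree at least $2$. I would first check that $B(G)$ has no coloops, i.e.\ every edge lies in a bicycle. If an edge $e=cd$ lies on a cycle, I extend that cycle to a theta or a handcuff using a second independent cycle (available since the cyclomatic number is at least $2$). If instead $e$ is a bridge, then each side of $e$ is a connected graph in which every vertex except an endpoint of $e$ has degree at least $2$; such a graph is not a forest, so it contains a cycle, and joining a cycle on each side through $e$ produces a handcuff containing $e$. The heart of the argument is then the local claim that any two edges $e,f$ sharing a vertex $w$ lie in a common bicycle; by transitivity and the connectedness of $G$ (any two edges are joined by a sequence of edges in which consecutive ones share a vertex) this upgrades to the statement that all edges lie in a single component, so $B(G)$ is connected. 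To prove the local claim I split on whether $w$ separates the other endpoints of $e$ and $f$. If there is a path avoiding $w$ between them, then $e,f$ lie on a common cycle $C$, and I attach one more independent cycle to $C$ (via a chordal path, giving a theta, or via a disjoint cycle reached by a path, giving a handcuff) to obtain a bicycle containing $e,f$. If $w$ is a cut vertex separating them, I grow a ``lollipop'' on each side, namely a path through $e$ (respectively $f$) reaching a cycle guaranteed by the minimum-degree condition, and glue the two at $w$ to form a handcuff containing both edges.

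The main obstacle is the local claim, and specifically two points within it. The first is the bridge/cut-vertex case: when $e$ and $f$ lie on no common cycle one cannot use a theta and must instead realize the edges on the connecting path of a handcuff, which is exactly where the minimum-degree hypothesis (forbidding leaves) is used to guarantee a cycle on each side. The second, more delicate point is ensuring that the subgraph produced has cyclomatic number \emph{exactly} two, so that it is a genuine circuit rather than a larger dependent set; I would handle this by building the theta or handcuff from minimal pieces (shortest connecting paths and smallest reachable cycles), so that exactly two independent cycles are created by construction, rather than by pruning a larger subgraph, which could inadvertently delete $e$ or $f$.
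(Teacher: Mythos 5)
The paper has no proof of this lemma to compare yours against: it is imported verbatim from Matthews (Proposition~2.4 of \cite{LRM}) and used as a black box, so any correct argument here is necessarily ``a different route.'' Judged on its own, your proof is sound and self-contained. The forward direction via coloops is exactly right (an edge at a degree-1 vertex lies in no bicycle, hence in no circuit of $B(G)$, and a matroid with a coloop and at least two elements is disconnected). For the converse, your reduction is the correct one: since ``lying in a common circuit'' generates an equivalence relation whose classes are the components of the matroid, and any two edges of a connected graph are linked by a chain of consecutively adjacent edges, it suffices to prove the local claim for two edges $e,f$ meeting at a vertex $w$; your dichotomy --- either the far endpoints of $e,f$ are joined by a path avoiding $w$ (so $e,f$ lie on a common cycle $C$, which the second independent cycle, guaranteed by cyclomatic number at least $2$, extends to a bicycle containing $C$), or $w$ separates them (glue at $w$ two lollipops, one through $e$ and one through $f$, each produced by a non-backtracking walk that exists precisely because of the minimum-degree hypothesis) --- is exhaustive, and each branch produces a genuine circuit because the first-revisit/shortest-piece constructions create exactly two independent cycles. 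Two small points to spell out when writing this up: in the common-cycle case the attached second cycle may meet $C$ in exactly one vertex, giving the ``two cycles sharing a vertex'' type of bicycle rather than a theta or a handcuff with a nondegenerate connecting path, so that case should be named explicitly; and your characterization of bicycles as connected subgraphs with minimum degree at least $2$ and $|E(H)|=|V(H)|+1$ merits the one-line handshake computation $\sum_v(\deg v-2)=2$, which shows that suppressing degree-2 vertices yields exactly one of the three allowed types. Neither is a gap in the approach; both are routine to fill.
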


Our main theorem in this section is the following result relating the fixing numbers of the cycle matroid and bicircular matroid of a graph $G$.

\begin{thm}\label{T:samefix} Suppose $G$ is a 3-connected graph with at least 5 vertices. Then $fix(M(G))=fix(B(G))$.
\end{thm}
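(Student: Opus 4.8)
The fixing number of a matroid is exactly the base size of its automorphism group acting on the ground set, so it depends only on $Aut(M)$ as a subgroup of $Sym(E)$. Since $M(G)$ and $B(G)$ share the ground set $E=E(G)$, the plan is to prove that $Aut(M(G))$ and $Aut(B(G))$ coincide as permutation groups on $E$; equality of the groups forces equality of base sizes, hence $fix(M(G))=fix(B(G))$. I will in fact show that, under the hypotheses, both groups equal the image of the natural action of $Aut(G)$ on the edge set. One containment is easy in each case: a graph automorphism permutes edges while preserving cycles and bicycles, so the induced copy of $Aut(G)$ lies inside both $Aut(M(G))$ and $Aut(B(G))$.

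For the cycle matroid the reverse containment is classical. By Whitney's $2$-isomorphism theorem a $3$-connected graph is reconstructible from its cycle matroid, and the only matroid isomorphisms not induced by graph isomorphisms arise from Whitney twists along $2$-separations, which $3$-connectivity forbids. Hence $Aut(M(G))$ is precisely the induced $Aut(G)$, and the work lies entirely in establishing the analogous statement for $B(G)$.

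The heart of the argument is to show every bicycle-preserving permutation of $E$ is graphic, and my approach is to recover the vertex--edge incidences of $G$ from the cocircuits of $B(G)$. The key observation, which I would isolate as a lemma, is that for $3$-connected $G$ with at least $5$ vertices the deletion $G-v$ is $2$-connected on at least $4$ vertices, hence connected and containing a cycle; therefore $E(G-v)$ has rank $|V|-1=r(B(G))-1$ and is closed, so it is a hyperplane whose complement, the star of $v$, is a cocircuit of $B(G)$. Thus vertex stars are cocircuits of $B(G)$, exactly mirroring the way vertex bonds are cocircuits of $M(G)$. The stars form a distinguished family of cocircuits that covers each edge exactly twice and whose members pairwise meet in at most one element (the edge joining the two vertices, when it exists). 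My plan is to show that this ``double cover by pairwise almost-disjoint cocircuits'' is forced by the matroid, so that any $\phi\in Aut(B(G))$ must carry star-cocircuits to star-cocircuits; identifying each vertex with its star then recovers $G$ (an edge is the unique common element of the two stars containing it) and shows $\phi$ is induced by a vertex permutation, giving $Aut(B(G))=Aut(G)$.

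The main obstacle I anticipate is precisely the matroid-invariant characterization of the star family: unlike the cycle matroid, where cocircuits have a clean cut interpretation, the cocircuits of $B(G)$ mix tree and cycle data, and one must rule out the possibility that an automorphism sends a star to a structurally different cocircuit. Pinning this down, together with verifying that the reconstruction is unambiguous, is where the hypotheses do their work; the bound of $5$ vertices is what excludes the sporadic low-order coincidences (for example $K_4$, whose line graph is the octahedron with strictly larger automorphism group, and the non-$3$-connected theta graph of Example~\ref{E:autoex1}, where $fix(M(G))$ and $fix(B(G))$ genuinely differ). Lemma~\ref{th:matthews} enters to guarantee that $B(G)$ is connected, so that the cocircuit structure is rich enough for the reconstruction to succeed. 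Once the star family is shown to be invariant, everything else is bookkeeping.
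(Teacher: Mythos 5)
Your overall reduction matches the paper's: show that both $Aut(M(G))$ and $Aut(B(G))$ coincide, as permutation groups on $E(G)$, with the induced action of $Aut(G)$, and then equality of base sizes is immediate (this is exactly Theorem~\ref{T:autogps} in the paper). Your cycle-matroid half (via Whitney's $2$-isomorphism theorem) is fine, and your lemma that vertex stars are cocircuits of $B(G)$ is correct---the rank/closure computation goes through. However, there is a genuine gap precisely at the step you yourself flag as ``the main obstacle'': you never prove that an automorphism of $B(G)$ carries star cocircuits to star cocircuits. Your proposed route---characterize the star family as a ``double cover by pairwise almost-disjoint cocircuits'' that is ``forced by the matroid''---is only a plan, and it has a structural flaw: membership in such a family is a property of the family, not of an individual cocircuit, so an automorphism $\phi$ is only guaranteed to map the star family $\mathcal{S}$ to some other family $\phi(\mathcal{S})$ with the same combinatorial properties. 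To conclude $\phi(\mathcal{S})=\mathcal{S}$ you would need to prove that such a family is \emph{unique} in $B(G)$, a nontrivial claim you neither state nor prove. Since this invariance is the entire content of the theorem on the bicircular side, the proposal is incomplete.

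The paper closes this gap with a property of \emph{individual} cocircuits that is manifestly automorphism-invariant: for the graphs in question, a cocircuit of $B(G)$ is a vertex star if and only if its complementary hyperplane is a \emph{connected} matroid. This is where Lemma~\ref{th:matthews} actually does its work---not, as you suggest, to show that $B(G)$ itself is connected, but applied to $G-v$ (connected, minimum degree at least $2$) to show star complements are connected hyperplanes, whereas every non-star cocircuit has a disconnected complementary hyperplane (either its complement splits into a tree component plus cyclic components, or it is the edge set of a spanning tree, whose bicircular matroid is free). Note also the exceptional case any uniform argument must confront: if $G$ is a wheel $W_{n}$ (which is $3$-connected with at least $5$ vertices for $n\geq 4$), the complement of the hub star is a cycle, whose bicircular matroid is the free matroid and hence disconnected, so the connectivity characterization fails for the hub; the paper therefore treats wheels separately (Lemma~\ref{L:wheel}), pinning down $Aut(B(W_n))\cong D_n$ by counting size-$3$ cocircuits. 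A correct completion of your approach must either prove uniqueness of your double-cover family (including for wheels) or, like the paper, split off the wheel case and use an invariant characterization of individual star cocircuits elsewhere.
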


To prove Theorem~\ref{T:samefix}, we will need to determine the automorphism groups of $M(G)$ and $B(G)$. 

\begin{thm}\label{T:autogps}
\begin{enumerate}
\item If $G$ is 3-connected, then $Aut(G) \cong Aut(M(G))$.
\item If $G$ is 2-connected with minimum degree 3 and at  least 5 vertices, then $Aut(G) \cong Aut(B(G))$.
\end{enumerate}
\end{thm}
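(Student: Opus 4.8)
The plan is to build, for each matroid $M \in \{M(G), B(G)\}$, the natural homomorphism $\Phi : Aut(G) \to Aut(M)$ sending a graph automorphism to the permutation it induces on the edge set. This is well defined because a graph automorphism carries cycles to cycles and bicycles to bicycles, hence preserves the circuits of $M(G)$ and of $B(G)$. First I would check that $\Phi$ is injective: both hypotheses force minimum degree at least $3$, so if a graph automorphism $\alpha$ fixes every edge (setwise), then for a vertex $v$ with three incident edges $vw_1, vw_2, vw_3$ having distinct other endpoints, $\alpha$ cannot consistently swap $v$ with any single $w_i$, and must fix $v$; thus $\alpha = \mathrm{id}$. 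So in both cases the whole content of the theorem is the \emph{surjectivity} of $\Phi$: every matroid automorphism is induced by a graph automorphism.

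For part (1), surjectivity is exactly Whitney's $2$-isomorphism theorem. A matroid automorphism of $M(G)$ is a cycle-preserving permutation of $E(G)$, i.e.\ an isomorphism $M(G) \to M(G)$; by Whitney's theorem two graphs with isomorphic cycle matroids are $2$-isomorphic, and for $3$-connected graphs $2$-isomorphism coincides with graph isomorphism. Hence the permutation is realized by an element of $Aut(G)$ and $\Phi$ is onto.

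For part (2) I would work through the cocircuit description of $B(G)$ given above. The key observation is that the star $\mathrm{st}(v)$ of a vertex $v$ (the set of all edges incident to $v$) is a cocircuit: removing $\mathrm{st}(v)$ isolates $v$ as a trivial tree component, while $G-v$ is connected (by $2$-connectivity) and contains a cycle (using minimum degree $3$ and at least $5$ vertices), so it is not a tree; thus $E - \mathrm{st}(v)$ has exactly one tree component, and restoring any one edge reattaches $v$ to the cyclic part and destroys that tree component, which gives minimality. I would call these the \emph{vertex cocircuits}. Since each edge $e = uv$ lies in exactly the two vertex cocircuits $\mathrm{st}(u)$ and $\mathrm{st}(v)$, once we know a matroid automorphism $\phi$ permutes the vertex cocircuits it induces a permutation $\bar\phi$ of $V(G)$, with $\phi(e)$ the unique edge incident to both $\bar\phi(u)$ and $\bar\phi(v)$; that is, $\phi = \Phi(\bar\phi)$ and $\bar\phi \in Aut(G)$, as required.

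The crux — and the step I expect to be the main obstacle — is to characterize the vertex cocircuits \emph{intrinsically}, so that every $\phi \in Aut(B(G))$ necessarily maps vertex cocircuits to vertex cocircuits (a priori $\phi$ only permutes the full family of cocircuits). The candidate characterization is that the vertex cocircuits are precisely the cocircuits whose remaining tree component is a single vertex; I would try to detect this matroidally, for instance by showing that they are exactly the inclusion-minimal (or minimum-size) cocircuits, or via a local count of how a cocircuit meets the circuits through a fixed element. Here the hypotheses do real work: minimum degree $3$ forces $|\mathrm{st}(v)| \ge 3$, the bound of $5$ vertices rules out the degenerate case $G = K_4$ (where $B(K_4) \cong U_{4,6}$ acquires the large symmetry group $S_6$), and Lemma~\ref{th:matthews} guarantees $B(G)$ is connected so that the cocircuit structure is uniform. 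Verifying that no cocircuit arising from a larger tree component can masquerade as a vertex cocircuit is the delicate point, and is where the bulk of the casework will lie.
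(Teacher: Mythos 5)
Your part~(1) is correct but takes a genuinely different route from the paper. The paper never invokes Whitney's theorem; instead it characterizes the vertex stars $N(v)$ intrinsically as the cocircuits of $M(G)$ whose complementary hyperplanes are \emph{connected} matroids (3-connectivity makes $E-N(v)$ a 2-connected graph, hence a connected hyperplane, while $E-C^*$ is disconnected for every non-star cocircuit $C^*$), and then reconstructs the vertices and the induced graph automorphism. Your Whitney route is shorter and standard, but you should quote the strong form of the theorem: the statement ``two graphs with isomorphic cycle matroids are 2-isomorphic'' only produces \emph{some} 2-isomorphism and says nothing about the given matroid automorphism; what you actually need is that every cycle-preserving edge bijection between 3-connected graphs is induced by a graph isomorphism (the induced-isomorphism form of Whitney's theorem).

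Part~(2), however, has a genuine gap, and it sits exactly where you predicted: you never prove that an automorphism of $B(G)$ must carry vertex cocircuits to vertex cocircuits, and neither of your candidate fixes can work. ``Inclusion-minimal cocircuits'' is vacuous, since the cocircuits of any matroid form an antichain. ``Minimum-size cocircuits'' is false: the cocircuit determined by a tree component $T$ has size $\sum_{v\in V(T)}\deg_G(v) - 2\left(|V(T)|-1\right)$, so a vertex of large degree yields a star far bigger than some non-star cocircuits; concretely, in the wheel $W_n$ with $n\ge 5$ the hub star has size $n$ while rim stars have size $3$ and two adjacent rim vertices give a non-star cocircuit of size $4$. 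The paper's actual mechanism is matroid connectivity of hyperplanes, which is automorphism-invariant: for a vertex $v$ such that $G-N(v)$ is \emph{not} a cycle, the graph $G-N(v)$ is connected (2-connectivity) with no degree-1 vertices (minimum degree 3), so Matthews' lemma applied to $G-N(v)$ --- not to $G$ itself, as in your sketch --- shows $E-N(v)$ is a connected hyperplane of $B(G)$, whereas non-star cocircuits have disconnected complements. This criterion genuinely fails when some $G-N(v)$ is a cycle, because then $B(G-N(v))$ is a free, hence disconnected, matroid; but in that case $G$ is forced to be a wheel, and the paper disposes of wheels by a separate bespoke argument (Lemma~\ref{L:wheel}, which computes $Aut(B(W_n))\cong D_n$ directly from the size-3 cocircuits). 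So the ``at least 5 vertices'' hypothesis is not merely excluding $K_4$: it is what makes the wheel lemma applicable. Your proposal is missing both the connected-hyperplane criterion and the wheel dichotomy, and these together constitute essentially all of the paper's proof of part~(2).
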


Since $Aut(M(G))$ and $Aut(B(G))$ both act on the edge set of a graph, Theorem~\ref{T:samefix} is an immediate consequence of Theorem~\ref{T:autogps}. Before proving Theorem~\ref{T:autogps}, we need a lemma concerning wheels. We let $W_n$ be the wheel on $n+1$ vertices ($W_6$ is shown in Fig.~\ref{fig:wheel1}).

\begin{lemma}\label{L:wheel}
For $n \geq 4$,  $Aut(B(W_n))\cong D_n$.
\end{lemma}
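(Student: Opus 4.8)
The plan is to prove the two inclusions $D_n \le Aut(B(W_n))$ and $|Aut(B(W_n))| \le 2n$ separately, and then combine them. Write $W_n$ with hub $h$, rim vertices $v_1,\dots,v_n$ in cyclic order, spokes $s_i = h v_i$, and rim edges $r_i = v_i v_{i+1}$ (indices mod $n$), so $|E| = 2n$. For the first inclusion I would recall that $Aut(W_n) \cong D_n$ when $n \ge 4$: the hub is the unique vertex of degree $n > 3$, so every graph automorphism fixes $h$ and acts on the rim as a symmetry of an $n$-cycle. Every graph automorphism preserves cycles and connectivity, hence preserves bicycles, so it induces a matroid automorphism of $B(W_n)$, and this assignment is injective because a graph automorphism of a connected graph of minimum degree $\ge 2$ is determined by its action on edges. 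Thus $D_n$ embeds in $Aut(B(W_n))$, and it remains to rule out further automorphisms.

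For the upper bound, the idea is to locate a canonical family of small circuits that any automorphism must permute, then read off a dihedral action. Using the description of the circuits of $B(W_n)$ as minimal bicycles (connected subgraphs of cycle rank two), I would first compute the girth of $B(W_n)$. Since $W_n$ is simple, the shortest theta-subgraph joins $h$ to a rim vertex $v_{i+1}$ along the spoke $s_{i+1}$ together with the two length-two paths through $v_i$ and $v_{i+2}$, giving the five-element circuit $T_i = \{s_i, r_i, s_{i+1}, r_{i+1}, s_{i+2}\}$. A short case check (thetas with both branch vertices on the rim cost at least $n+2$, while figure-eights and handcuffs cost at least $6$) shows that these $n$ sets are exactly the circuits of minimum size $5$. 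Consequently every matroid automorphism permutes $\{T_1,\dots,T_n\}$.

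Next I would extract the cyclic structure. An incidence count shows that each spoke lies in exactly three of the $T_i$ while each rim edge lies in exactly two, so every automorphism preserves the spoke/rim partition. The relation $|T_i \cap T_j| = 3$ holds precisely for cyclically consecutive indices (one verifies $|T_i \cap T_{i+1}| = 3$, while all other pairs meet in at most two elements, the $n=4$ wraparound giving $2$ for opposite pairs), so the $T_i$ form an $n$-cycle under this relation and we obtain a homomorphism $Aut(B(W_n)) \to Aut(C_n) \cong D_n$. I would then show this map is injective: if $\phi$ fixes every $T_i$ setwise, it fixes the singleton $T_{i-1} \cap T_i \cap T_{i+1} = \{s_{i+1}\}$, hence fixes every spoke, and thereby fixes the unique rim edge $r_i$ in $T_{i-1} \cap T_i = \{s_i, s_{i+1}, r_i\}$, hence fixes every rim edge, so $\phi$ is the identity. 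This yields $|Aut(B(W_n))| \le 2n$, and combined with the embedding of $D_n$ it forces $Aut(B(W_n)) \cong D_n$.

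The main obstacle is the girth computation together with its uniqueness, that is, verifying that the $T_i$ are genuinely the only five-element circuits and that the intersection pattern degrades cleanly to an $n$-cycle even in the small case $n = 4$ (where the opposite thetas meet in two elements rather than being disjoint). Everything downstream is short bookkeeping once this structural point is pinned down.
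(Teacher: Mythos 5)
Your proof is correct, but it takes a genuinely different route from the paper's. The paper argues via \emph{cocircuits} rather than circuits: the three edges incident to a rim vertex form a cocircuit of $B(W_n)$, and these $n$ triples are the only size-3 cocircuits; since each rim edge lies in exactly two of them and each spoke in exactly one, every matroid automorphism preserves the rim/spoke partition. The paper then finishes with the orbit--stabilizer theorem, showing $stab(a_1)=\{e,R_1\}$ for a rim edge $a_1$ (where $R_1$ is the reflection fixing $a_1$) and concluding $|Aut(B(W_n))|=n\cdot 2=2n$. You instead identify the minimum-size \emph{circuits} --- the $n$ five-element thetas $T_i$ --- show that automorphisms permute them while respecting the consecutivity relation $|T_i\cap T_j|=3$, and embed $Aut(B(W_n))$ into $Aut(C_n)\cong D_n$ by a homomorphism with trivial kernel. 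The two arguments are dual in spirit, with matching incidence counts (spokes in three minimum circuits vs.\ one small cocircuit, rim edges in two of each) doing the same job of separating spokes from rim edges. What the paper's route buys is economy: size-3 cocircuits need far less case analysis than your girth computation, which must rule out rim-based thetas, figure-eights, and handcuffs and handle the $n=4$ wraparound; moreover its stabilizer argument directly exhibits a 2-element fixing set, a fact the authors reuse immediately afterward to note $fix(B(W_n))=2$. What your route buys is a cleaner group-theoretic finish: the explicit kernel computation (spokes recovered as triple intersections $T_{i-1}\cap T_i\cap T_{i+1}$, rim edges as the remaining element of $T_{i-1}\cap T_i$) replaces the paper's somewhat telegraphic step ``we can use the cocircuits of size 3 to show that $\{a_1,a_2\}$ is a fixing set,'' so your write-up is the more self-contained of the two.
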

\begin{proof}
Label the edges of $W_n$ as in Figure \ref{fig:wheel1}, where the rim edges are $a_1, a_2, \dots, a_n$ and the spokes are $b_1, b_2, \dots, b_n$, with $a_i$ incident to $b_i$.  Note that the three edges incident to a rim vertex form a cocircuit; these $n$ cocircuits of size 3 are the only cocircuits in $B(W_n)$ of size 3 (and there are no smaller cocircuits).  

\begin{figure}[h]\label{F:wheel}
\begin{center}
\includegraphics[width=2in]{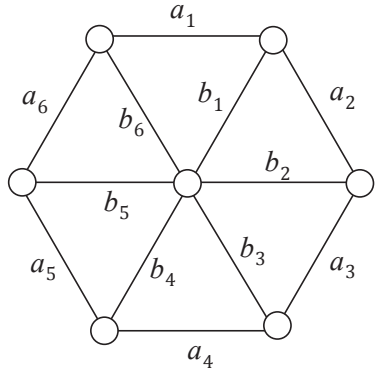}

\caption{A labeled wheel -- see the proof of Lemma~\ref{L:wheel}.}
\label{fig:wheel1}
\end{center}
\end{figure}

 Note that every dihedral symmetry of the wheel induces an automorphism of the bicircular matroid $B(W_n)$, so $D_n \leq Aut(B(W_n))$.  If $\phi \in Aut(B(W_n))$, then $\phi$ maps rim edges to rim edges (and, hence,  spoke edges to spoke edges).  This follows from the fact that each rim edge is in precisely two cocircuits of size 3 while each spoke edge is in one such cocircuit in the bicircular matroid $B(W_n)$.  Thus, the orbit of the edge $a_1$ is $orb(a_1)=\{a_1, a_2, \dots, a_n\}$ (see Figure \ref{fig:wheel1} for edge labels).

Now let $\sigma \in stab(a_1)$, the stabilizer of  $a_1$.  Then either $\sigma(a_2)=a_2$ or $\sigma(a_2)=a_n$, since $a_2$ and $a_n$ are the unique rim edges in a cocircuit of size three that contains $a_1$.  Suppose $\sigma(a_2)=a_2$.  Then we can use the cocrcuits of size 3 to show that  $\{a_1,a_2\}$ is a fixing set for $B(W_n)$. Thus, $\sigma$ is the identity.

Now assume $\sigma(a_2)=a_n$.  Then $\sigma(a_n)=a_2$ by the same cocircuit argument given above.  Letting $R_1$ denote  the automorphism corresponding to the dihedral operation of reflection through a line normal to edge $a_1$, the automorphism $\sigma R_1$ fixes $a_1$ and $a_2$, so, again, $\sigma R_1 =e$, that is, $\sigma = R_1$.

Then $stab(a_1)=\{e,R_1\}$, and so  $|Aut(B(W_n))|=2n$ by the orbit-stabilizer theorem.  Since $D_n \leq Aut(B(W_n))$, we have $Aut(B(W_n))=D_n$.
\end{proof}

We note that the proof of Lemma~\ref{L:wheel} shows that $fix(B(W_n))=2$ for $n \geq 4$. We also remark that it is straightforward to show the graph automorphism group $Aut(W_n)=D_n$.  We now prove Theorem~\ref{T:autogps}.

\begin{proof} [Proof Thm.~\ref{T:autogps}]
1.  We assume $G$ is a 3-connected graph and note that every graph automorphism induces an automorphism of the cycle matroid $M(G)$. We need to show that every matroid automorphism arises in this way. Let $v$ be a vertex of $G$ with incident edges $N(v)$. Note $N(v)$ is a cocircuit of the cycle matroid. Since $G$ is 3-connected, the induced graph $E-N(v)$ is 2-connected; hence $E-N(v)$ is a connected hyperplane in $M(G)$. Further, if $C^*$ is a cocircuit of $M(G)$ that does not correspond to a vertex star $N(v)$,  then $E-C^*$ is disconnected (as a graph), and so is a disconnected  hyperplane in $M(G)$. Thus, the connected hyperplanes of the cycle matroid have the form $E-N(v)$ for some vertex $v$.

Since matroid automorphisms preserve connected hyperplanes, we can reconstruct the vertices, and hence the graph $G$, from the cycle matroid. Thus, any matroid automorphism induces a unique graph automorphism, so $Aut(G)\cong Aut(M(G))$.

2.  Suppose $G$ is 2-connected with $n$ vertices and minimum vertex degree at least 3. As above, let $N(v)$ be the edges incident to the vertex $v$. 
Then there are two cases to consider.

{\sc Case 1:}  $G-N(v)$  is a cycle. Then  $v$ is adjacent to every other vertex of $G$; if not, $G$ would have a vertex of degree 2. Then  $G=W_{n-1}$ is a wheel  for some $n\geq 4$.  By Lemma~\ref{L:wheel}, we have $Aut(B(W_{n-1}))\cong D_{n-1} \cong Aut(W_{n-1})$.

{\sc Case 2:}   $G-N(v)$ is not a cycle. Note that $G-N(v)$ contains no vertices of degree 1 (since $G$ has minimum degree at least 3), and $G-N(v)$ is a connected graph (since $G$ is 2-connected). Thus, by Lemma~\ref{th:matthews}, the bicircular matroid on the edges of $G-N(v)$ forms a connected hyperplane in $B(G)$. Then the proof proceeds as in the cycle matroid case. 
\end{proof}

Theorem~\ref{T:samefix} now follows since 3-connected graphs have no vertices of degree 2. The next example illustrates the difference between 2-connectivity and 3-connectivity for  cycle and bicircular matroids.

\begin{ex}\label{E:2conn}
Consider the graph $G$ in Figure~\ref{F:2conn}. $G$ is 2-connected, but not 3-connected, since deleting vertices $v$ and $w$ disconnects the graph. ($G$ is a 2-sum of two copies of the graph $K_4-e$.)

\begin{figure}[h]
\begin{center}
\includegraphics[width=3in]{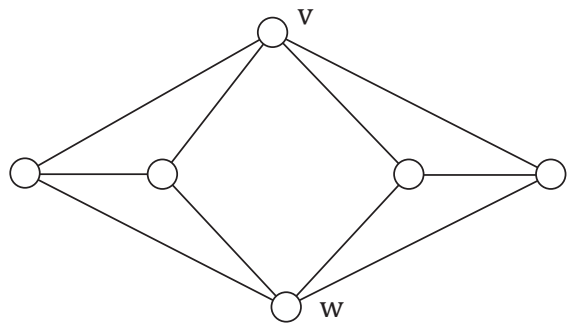}

\caption{$G$ is 2-connected, but not 3-connected.}
\label{F:2conn}
\end{center}
\end{figure}

Then $|Aut(G)|= |Aut(B(G))|=16$ and  $|Aut(M(G))|=128.$ The fact that $Aut(G)\cong Aut(B(G))$ follows from Theorem~\ref{T:autogps}(2).  For the cycle matroid, we find $Aut(M(G)) \cong (Aut(M(K_4-e))\times Aut(M(K_4-e)))\rtimes \mathbb{Z}_2$, the same symmetry group as the direct sum $M(K_4-e) \oplus M(K_4-e).$  (The semi-direct factor of $\mathbb{Z}_2$ interchanges the two copies of $K_4-e$.) 

One difference between the cycle and bicircular matroids follows from Whitney's 2-isomorphism theorem. Splitting the graph at $v$ and $w$ and reattaching (after swapping $v$ and $w$ in one component) gives an automorphism of the cycle matroid that is not present for the bicircular matroid. Although $|Aut(M(G))|>|Aut(B(G))|$, we still find  $fix(M(G))=fix(B(G))=2$, while $fix(G)=3$. 

In this example, we note that $Aut(B(G))$ is a subgroup of $Aut(M(G))$, while the reverse containment holds for Example~\ref{E:autoex1}. Thus, no subgroup relation holds for these two groups, in general. 
\end{ex} 

We conclude by applying Theorem~\ref{T:samefix} to two important classes: complete graphs and complete bipartite graphs.

\begin{cor}\label{C:complete}
\label{t:kn} Let $G=K_n$, the complete graph on $n$ vertices.
\begin{enumerate}
\item Cycle matroid:  For $n \geq 1$,  $\displaystyle{fix(M(K_n))=\left\lfloor \frac{2n}{3} \right\rfloor.}$
\item Bicircular matroid:  $fix(B(K_4))=5$.  For $n\neq 4$,  $\displaystyle{fix(B(K_n))=\left\lfloor \frac{2n}{3} \right\rfloor.}$
\end{enumerate}
\end{cor}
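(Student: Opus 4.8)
The plan is to establish part (1) directly as a base-size computation and then deduce part (2) by transferring through Theorem~\ref{T:samefix}, treating $K_4$ as a genuine exception. For $n\ge 4$ the graph $K_n$ is $3$-connected, so Theorem~\ref{T:autogps}(1) gives $Aut(M(K_n))\cong Aut(K_n)$, and the latter is the symmetric group $S_n$ acting on the edge set of $K_n$, i.e.\ on the $2$-subsets of $[n]$. Hence $fix(M(K_n))$ is exactly the base size of $S_n$ in this action, and all of part (1) reduces to a covering problem on $K_n$. I would dispose of the tiny cases $n\le 3$ by direct inspection (the relevant matroids are uniform there) and do the real work for $n\ge 4$.

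The key step is to decide when a set $S$ of edges is a fixing set. View $S$ as a subgraph $H$ of $K_n$. A permutation $\sigma\in S_n$ fixes an edge $\{i,j\}$ setwise precisely when it either fixes both endpoints or swaps them. I would first show that every vertex of degree at least $2$ in $H$ is fixed by each $\sigma\in stab(S)$: if $v$ has distinct $H$-neighbours $u,w$ and $\sigma$ swaps $v$ with $u$ along $\{v,u\}$, then $\{v,w\}$ cannot be preserved, a contradiction. Consequently every neighbour of such a vertex is fixed too, so each connected component of $H$ on at least three vertices is fixed pointwise, a single-edge ($K_2$) component contributes one free transposition, and the vertices missed by $S$ may be permuted arbitrarily. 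This yields $stab(S)\cong(\mathbb{Z}_2)^{a}\times S_{b}$, where $a$ is the number of $K_2$-components and $b$ the number of uncovered vertices. Therefore $S$ is a fixing set if and only if $H$ has no $K_2$-component and at most one uncovered vertex.

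It then remains to minimise $|S|=|E(H)|$ under these two conditions. Writing the non-trivial components of $H$ with vertex counts $m_1,\dots,m_p\ge 3$ and letting $q\in\{0,1\}$ be the number of uncovered vertices, connectivity forces $|S|\ge\sum_i(m_i-1)=(n-q)-p$. Since each part has size at least $3$ we have $p\le\lfloor (n-q)/3\rfloor$, so $|S|\ge\lceil 2(n-q)/3\rceil$; minimising over $q\in\{0,1\}$ gives the lower bound $\lfloor 2n/3\rfloor$. For the matching construction I would cover the vertices by vertex-disjoint paths: use $\lfloor n/3\rfloor$ paths on three vertices when $n\equiv 0\pmod 3$; use paths on three vertices together with a single isolated vertex when $n\equiv 1\pmod 3$; and use one path on four vertices, the remaining ones on three vertices, plus one isolated vertex when $n\equiv 2\pmod 3$. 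A short case check confirms each construction satisfies the two conditions and uses exactly $\lfloor 2n/3\rfloor$ edges, completing part (1).

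For part (2), when $n\ge 5$ the graph $K_n$ is $3$-connected with at least five vertices, so Theorem~\ref{T:samefix} applies and $fix(B(K_n))=fix(M(K_n))=\lfloor 2n/3\rfloor$ with no further work. The value $n=4$ is precisely where Theorem~\ref{T:samefix} fails (only four vertices), and here the bicircular matroid is far more symmetric: since $B(K_n)$ is the transversal matroid $M_{n,2}$ of the $2$-subsets of $[n]$, Proposition~\ref{P:hyper} (the case $n-k=2$) gives $B(K_4)\cong U_{4,6}$, whose automorphism group is all of $S_6$, so $fix(B(K_4))=6-1=5$; the cases $n\le 3$ again follow from the uniform structure. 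I expect the main obstacle to lie in the stabiliser characterisation and the exact integer optimisation of the middle two paragraphs, since the passage to the bicircular matroid is then immediate from the earlier theorems once $K_4$ is isolated as a special case.
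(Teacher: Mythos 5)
Your proposal is correct, and for part (1) it takes a genuinely different route from the paper. The paper does not prove the formula $fix(M(K_n))=\left\lfloor 2n/3 \right\rfloor$ at all: it cites unpublished work of Maund (via Prop.~2.24 of Bailey and Cameron) on the base size of the action of $S_n$ on $2$-subsets of $[n]$, and then, exactly as you do, gets the bicircular statement from Theorem~\ref{T:samefix} for $n\geq 5$, from $B(K_4)\cong U_{4,6}$ for the exceptional case, and by inspection for $n\leq 3$. You instead prove the base-size result from scratch: after invoking Theorem~\ref{T:autogps}(1) to identify $Aut(M(K_n))$ with $S_n$ acting on edges, you characterize fixing sets as edge sets whose underlying subgraph has no $K_2$-component and misses at most one vertex (with pointwise stabilizer $(\mathbb{Z}_2)^a\times S_b$), then run the integer optimization $|S|\geq (n-q)-p\geq \left\lceil 2(n-q)/3 \right\rceil$ and exhibit disjoint-path constructions meeting the bound; I checked the stabilizer argument, the identity $\left\lceil 2(n-1)/3 \right\rceil=\left\lfloor 2n/3 \right\rfloor$, and the three congruence-class constructions, and all are sound. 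What your approach buys is self-containedness (the paper's part (1) rests on an unpublished reference); what it costs is length, since the paper's proof of this corollary is three sentences. One caveat affecting the paper and your write-up equally: at $n=2$ the stated formula is actually false, since $M(K_2)$ and $B(K_2)$ are single coloops with trivial automorphism group, so both fixing numbers are $0$ rather than $\left\lfloor 4/3 \right\rfloor=1$; the "direct inspection" of tiny cases you promise would surface this, so that step should record the discrepancy rather than assert agreement.
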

\begin{proof}
\begin{enumerate}
\item This follows from unpublished work of T. Maund \cite{Maund}
on the base size of the action of $S_n$ on 2-subsets of $\{1, 2, \dots, n\}$.  See Prop. 2.24 of Bailey and Cameron \cite{BailCam}.
\item  $B(K_4)\cong U_{4,6}$, the uniform matroid of rank 4 on 6 points.  Thus $fix(B(K_4))=5$.
 It is easy to check the formula  for the cases $n=1, 2$ and 3.  Now assume $n > 4$.  Then the result follows immediately from Theorem~\ref{T:samefix} and part 1.
\end{enumerate}
\end{proof}

We turn our attention to one final class of graphs, the complete bipartite graphs $K_{m,n}$.  We begin with a lemma.

\begin{lemma} \label{l:fixkmn} Let $A \subseteq E(K_{m,n})$, where $m<n$.  Let $V \cup W$ be the vertex partition with $|V|=m$ and $|W|=n$.  Suppose the edges of $A$ are incident to at least $m-1$ vertices of $V$ and $n-1$ vertices of $W$.
If $\sigma \in Aut(B(K_{m,n}))$ fixes $A$ pointwise, then $\sigma$ fixes every edge of $K_{m,n}$.
\end{lemma}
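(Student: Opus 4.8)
The plan is to reduce the statement to a transparent computation inside the group $S_m\times S_n$. First I would identify the automorphism group of the bicircular matroid. When $m\ge 3$ the graph $K_{m,n}$ is $2$-connected, has minimum degree $m\ge 3$, and has $m+n\ge 7\ge 5$ vertices, so Theorem~\ref{T:autogps}(2) gives $Aut(B(K_{m,n}))\cong Aut(K_{m,n})$. Since $m<n$, the two parts $V$ and $W$ consist of vertices of different degrees ($n$ and $m$ respectively), so every graph automorphism preserves the bipartition; hence $Aut(K_{m,n})=S_m\times S_n$, acting as $\operatorname{Sym}(V)\times\operatorname{Sym}(W)$. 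Thus any $\sigma\in Aut(B(K_{m,n}))$ is realized by a pair $(\alpha,\beta)$ with $\alpha$ a permutation of $V$ and $\beta$ a permutation of $W$, sending the edge $vw$ to $\alpha(v)\beta(w)$.

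Next I would exploit the incidence hypothesis. The central observation is that if $\sigma$ fixes an edge $vw$ pointwise, then, since an edge has exactly one endpoint in each part and $\alpha(v)\in V$, $\beta(w)\in W$, we must have $\alpha(v)=v$ and $\beta(w)=w$. Now the edges of $A$ meet at least $m-1$ vertices of $V$; for each such vertex $v$ there is an edge of $A$ incident to $v$, and since $\sigma$ fixes that edge we get $\alpha(v)=v$. Hence $\alpha$ fixes at least $m-1$ of the $m$ points of $V$. A permutation of a finite set cannot have exactly one non-fixed point, so $\alpha$ fixes all of $V$, i.e., $\alpha=\mathrm{id}$. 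The identical argument applied to $W$ (using that $A$ meets at least $n-1$ vertices of $W$) gives $\beta=\mathrm{id}$. Therefore $\sigma=(\mathrm{id},\mathrm{id})$ fixes every edge, as claimed.

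The main obstacle is not the group-theoretic step above but the verification that Theorem~\ref{T:autogps}(2) applies --- that is, the small cases where its minimum-degree hypothesis fails. For $m=1$ the graph $K_{1,n}$ is acyclic, so $B(K_{1,n})\cong U_{n,n}$ is free and $Aut(B(K_{1,n}))=S_n$ acts as $\operatorname{Sym}(W)$; here the $W$-argument alone applies verbatim and closes the case. The genuinely delicate case is $m=2$, where $K_{2,n}$ has vertices of degree $2$ and Theorem~\ref{T:autogps}(2) gives no information. There I would need a direct analysis of $Aut(B(K_{2,n}))$ --- for instance via the size-$3$ cocircuits arising from the degree-$2$ vertices of $W$, in the spirit of the wheel argument in Lemma~\ref{L:wheel} --- to confirm that it too equals $S_2\times S_n$ with no extra symmetries, after which the same counting finishes the proof.
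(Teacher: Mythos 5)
Your argument takes a genuinely different route from the paper's. The paper omits the proof of this lemma entirely, saying only that it proceeds via cocircuits in the manner of Lemma~\ref{L:wheel} (the stars of the $W$-vertices are the cocircuits of size $m$, and an automorphism fixing $A$ pointwise is then pinned down through the cocircuit structure). You instead invoke Theorem~\ref{T:autogps}(2) to identify $Aut(B(K_{m,n}))$ with $Aut(K_{m,n})\cong S_m\times S_n$, and finish with the elementary observation that a permutation of a finite set cannot move exactly one point. For $m\geq 3$ this is correct and clean; the one point worth making explicit is that you need the isomorphism in Theorem~\ref{T:autogps}(2) to be the natural one, i.e., that every matroid automorphism of $B(G)$ is induced by a graph automorphism acting on edges, and this is exactly what the paper's proof of that theorem establishes (via connected hyperplanes). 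Your $m=1$ case is also handled correctly. What your approach buys is a short, transparent argument that leans on machinery already proved; what the cocircuit approach buys is independence from the connectivity hypotheses of Theorem~\ref{T:autogps}.

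The case $m=2$ that you flag as ``genuinely delicate'' is, however, not a gap you could close: it is a gap in the lemma itself, and your proposed patch cannot work. In $K_{2,n}$ every vertex $w_j\in W$ has degree 2, and its two incident edges $v_1w_j$ and $v_2w_j$ are clones in $B(K_{2,n})$: a bicycle has no vertices of degree 1, so any circuit containing one of these edges contains the other, and hence the transposition swapping them (and fixing all other edges) is a matroid automorphism. Therefore $Aut(B(K_{2,n}))\cong \mathbb{Z}_2\wr S_n$, strictly larger than $S_2\times S_n$ (for $n=3$ it is larger still, since $B(K_{2,3})\cong U_{5,6}$ has automorphism group $S_6$). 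This breaks the lemma for $m=2$: the hypothesis permits $A$ to miss one vertex $w\in W$ entirely, and then the transposition of the two edges at $w$ is a nontrivial automorphism of $B(K_{2,n})$ fixing $A$ pointwise. So your proof establishes the lemma precisely in the cases where it is true ($m=1$ and $m\geq 3$); the statement (and the theorem on $fix(B(K_{m,n}))$ that relies on it, which also fails at $m=2$ since the $n$ clone pairs force $fix(B(K_{2,n}))\geq n$) should carry the additional hypothesis $m\neq 2$, consistent with the 3-connectivity required elsewhere, e.g., in Theorem~\ref{T:samefix}.
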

We omit the proof, which uses cocircuits in a manner similar to the proof of Lemma~\ref{L:wheel}.  Note that there is at most one vertex of $V$ and one vertex of $W$ that is not incident to an edge of $A$.

\begin{thm} Let $1\leq m<n$.  Then
\begin{enumerate}
\item $fix(M(K_{m,n}))=n-1$.
\item $fix(B(K_{m,n}))=n-1$.
\end{enumerate}

\end{thm}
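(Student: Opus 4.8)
The plan is to reduce both statements to a single base-size computation for the product group $S_m \times S_n$ acting on the edge set of $K_{m,n}$, and then to settle that computation by a covering argument. Write the edge joining $v_i \in V$ to $w_j \in W$ as $v_iw_j$, so the edge set is identified with $V \times W$. Since $m < n$, no automorphism of $K_{m,n}$ can interchange the two parts, so $Aut(K_{m,n}) \cong S_m \times S_n$, acting on edges by the product action $(\pi,\rho)\cdot v_iw_j = v_{\pi(i)}w_{\rho(j)}$. First I would argue that this is also the automorphism group of each matroid: when $m \geq 3$ the graph $K_{m,n}$ is $3$-connected, so Theorem~\ref{T:autogps}(1) gives $Aut(M(K_{m,n})) \cong Aut(K_{m,n})$, and it is $2$-connected with minimum degree $m \geq 3$ and at least $5$ vertices, so Theorem~\ref{T:autogps}(2) gives $Aut(B(K_{m,n})) \cong Aut(K_{m,n})$. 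The value $m=1$ is immediate: $K_{1,n}$ is a star, which has no cycles and hence no bicycles, so both $M(K_{1,n})$ and $B(K_{1,n})$ equal the free matroid $U_{n,n}$, with automorphism group $S_n$ and fixing number $n-1$.

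Granting $Aut \cong S_m \times S_n$, the problem becomes purely combinatorial. The key observation is that $(\pi,\rho)$ fixes the edge $v_iw_j$ precisely when $\pi(i)=i$ and $\rho(j)=j$; hence the pointwise stabilizer of a set $A$ of edges factors as the product of the pointwise stabilizer (in $S_m$) of the set $V_A$ of $V$-endpoints met by $A$ with the pointwise stabilizer (in $S_n$) of the set $W_A$ of $W$-endpoints met by $A$. This stabilizer is trivial exactly when $|V_A| \geq m-1$ and $|W_A| \geq n-1$, since fixing all but one point on a side forces the remaining point. This is precisely the criterion of Lemma~\ref{l:fixkmn} for the bicircular matroid, and the identical argument proves the corresponding statement for the cycle matroid. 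Both bounds then follow. For the lower bound, each edge meets exactly one vertex of $W$, so covering $n-1$ vertices of $W$ needs at least $n-1$ edges; thus $fix \geq n-1$ for both matroids. For the upper bound, choose edges $v_{i_1}w_1, v_{i_2}w_2, \dots, v_{i_{n-1}}w_{n-1}$ whose $W$-endpoints are the distinct vertices $w_1,\dots,w_{n-1}$ and whose $V$-endpoints include $m-1$ distinct vertices of $V$ (possible since $n-1 \geq m-1$ and $K_{m,n}$ is complete). This set meets $n-1$ vertices of $W$ and $m-1$ of $V$, so by the criterion it is a fixing set of size $n-1$, giving $fix \leq n-1$.

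The step I expect to be delicate is the identification of the automorphism groups, and in particular the low-connectivity value $m=2$, which falls outside the hypotheses of Theorem~\ref{T:autogps}. When $m=2$, the graph $K_{2,n}$ is only $2$-connected and every vertex $w_j$ has degree $2$, so the reconstruction-of-vertices argument underlying Theorem~\ref{T:autogps} no longer applies. Concretely, the two edges incident to $w_j$ form a series pair (a $2$-element cocircuit in $M(K_{2,n})$, and a clonal pair in $B(K_{2,n})$) that may be transposed by a Whitney twist while fixing every other edge; this enlarges the automorphism group strictly beyond $S_2 \times S_n$. Because such a transposition fixes all edges outside its own pair, Proposition~\ref{P:clone} forces at least one edge from each of the $n$ series pairs into any fixing set, so the clean covering bound of the previous paragraph is not the full story and the case $m=2$ must be analyzed separately. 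My plan would therefore dispatch $m=1$ and $m \geq 3$ by the uniform covering argument and isolate $m=2$ for a dedicated clone-based treatment; pinning down $Aut(B(K_{2,n}))$ exactly, and reconciling it with the stated value, is the main obstacle.
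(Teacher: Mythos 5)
Your argument for $m\geq 3$ is correct and, at bottom, arrives at the same combinatorial criterion as the paper, but by a different route. The paper proves the bicircular case intrinsically: the vertex stars of $W$ are the only cocircuits of size $m$ in $B(K_{m,n})$, so any automorphism permutes them, and a set of $n-1$ edges meeting every vertex of $V$ and all but one vertex of $W$ has trivial stabilizer (this is the content of Lemma~\ref{l:fixkmn}, whose proof the paper omits); the lower bound is your same swap argument (two uncovered vertices of $W$ can be transposed); the cycle-matroid case is then deduced from Theorem~\ref{T:samefix}. You instead identify $Aut(M(K_{m,n}))\cong Aut(B(K_{m,n}))\cong S_m\times S_n$ via the two halves of Theorem~\ref{T:autogps} and compute the base size of the product action on $V\times W$, where ``stabilizer trivial iff $A$ meets at least $m-1$ vertices of $V$ and at least $n-1$ vertices of $W$'' is exactly the covering criterion of Lemma~\ref{l:fixkmn}. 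Both give $n-1$; your version treats the two matroids symmetrically (no appeal to Theorem~\ref{T:samefix}), makes the criterion transparent, and correctly dispatches $m=1$ directly (which the paper's reduction also does not cover, since $K_{1,n}$ is not $3$-connected, though there $M(K_{1,n})=B(K_{1,n})=U_{n,n}$ trivially).

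Your hesitation at $m=2$ is not a defect of your proposal: it points at a genuine error in the paper's statement, and your own observations essentially prove it. For $m=2$ every vertex $w_j$ has degree $2$, and the two edges at $w_j$ form a clonal pair in both $M(K_{2,n})$ and $B(K_{2,n})$ (the within-pair transposition maps every circuit to itself). Proposition~\ref{P:clone} then gives $fix\geq 2n-n=n>n-1$. In fact $fix(M(K_{2,n}))=n$ for all $n\geq 3$, and $fix(B(K_{2,n}))=n$ for $n\geq 4$ (one edge per pair is a fixing set, since the pairs are precisely the $2$-element series classes, which any automorphism permutes), while $B(K_{2,3})\cong U_{5,6}$ has fixing number $5$. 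So the theorem as stated fails at $m=2$. The paper's own proof breaks exactly where you predicted: Theorem~\ref{T:samefix} requires $3$-connectivity, which $K_{2,n}$ lacks, and the cocircuit argument behind Lemma~\ref{l:fixkmn} collapses because an automorphism can fix each size-$2$ cocircuit setwise while acting nontrivially inside it. The correct statement holds for $m=1$ and $3\leq m<n$, which is precisely what your proposal proves; what you called the main obstacle at $m=2$ is in fact a counterexample, not a missing step.
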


\begin{proof} We prove the theorem for the bicicular case -- the cycle matroid will then follow from Theorem~\ref{T:samefix}. As in  Lemma~\ref{l:fixkmn}, let $V \cup W$ be the vertex partition in $K_{m,n}$, with $|V|=m$ and $|W|=n$, and let  $w \in W$.  Let $A$ consist of any collection of $n-1$ edges of $K_{m,n}$ incident to each vertex of $V$ and each vertex of $W$ except $w$.  (For instance, we could let $A$  be  the edges of a matching of $V$ into $W-w$ along with an arbitrary collection of $n-m-1$ edges incident to the remaining vertices of $W-w$.)

Then $|A|=n-1$.  We claim $stab(A)$ is trivial.  The proof is similar to the proof given above for $B(K_n)$.  As before, the edges incident to a vertex form a cocircuit in $B(K_{m,n})$.  The vertices in $W$ generate cocircuits of size $m$, and there are no other cocircuits in $B(K_{m,n})$ of size $m$.  Thus, if $\sigma$ is an automorphism of $B(K_{m,n})$,  $\sigma$ permutes these cocircuits.  If $\sigma$ fixes every edge of $A$,  it is straightforward to show $\sigma$ is the identity.

Finally, if $|A|<n-1$, then there are at least two vertices of $W$ not incident to any edge of $A$.  Swapping these two vertices induces a non-trivial action on the edges.  This produces a non-trivial automorphism in $stab(A)$, and so $fix(B(K_{m,n}))>n-2$.
\end{proof}

When $m=n$, we need an additional edge to obtain our fixing number.

\begin{thm} \begin{enumerate}
\item $fix(M(K_{n,n}))=n$ for $n>2$, and $fix(M(K_{2,2}))=3$.
\item $fix(B(K_{n,n}))=n$ for $n>2$, and $fix(B(K_{2,2}))=3$.
\end{enumerate}

\end{thm}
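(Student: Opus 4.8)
The plan is to prove the two bicircular statements and deduce the cycle statements from them. For $n \geq 3$ the graph $K_{n,n}$ is $n$-connected, hence $3$-connected, and has $2n \geq 6 \geq 5$ vertices, so Theorem~\ref{T:samefix} gives $fix(M(K_{n,n}))=fix(B(K_{n,n}))$; thus it suffices to show $fix(B(K_{n,n}))=n$ for $n \geq 3$. The case $K_{2,2}$ is exceptional precisely because Theorem~\ref{T:samefix} does not apply ($K_{2,2}$ is only $2$-connected and has $4$ vertices), so I would handle it by direct computation: $K_{2,2}=C_4$ has $M(C_4)\cong U_{3,4}$ and $B(C_4)\cong U_{4,4}$ (the whole edge set is a single cycle, hence independent in the bicircular matroid), and both uniform matroids have fixing number $3$. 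The conceptual point forcing $n$ here, rather than the $n-1$ of the previous theorem for $m<n$, is that equal side sizes create a \emph{side-swapping} automorphism, and the extra edge is exactly what destroys it.

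For the lower bound I would show every $A \subseteq E(K_{n,n})$ with $|A|\leq n-1$ has nontrivial stabilizer. Writing $V\cup W$ for the bipartition, the $n-1$ edges of $A$ meet at most $n-1$ vertices on each side. If some side has two vertices uncovered by $A$, the automorphism transposing them fixes $A$ pointwise and is nontrivial. Otherwise $A$ covers exactly $n-1$ vertices on each side, and a counting argument forces $A$ to be a perfect matching between $V\setminus\{v_0\}$ and $W\setminus\{w_0\}$; relabeling so that $A=\{v_iw_i : 1\leq i\leq n-1\}$ with $v_0=v_n$ and $w_0=w_n$, the side swap $\tau(v_i)=w_i$, $\tau(w_i)=v_i$ fixes each $v_iw_i$ yet sends $v_nw_1$ to $v_1w_n$, so it is a nontrivial automorphism fixing $A$ pointwise. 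Either way $stab(A)$ is nontrivial, so $fix(B(K_{n,n}))\geq n$.

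For the upper bound I would exhibit a fixing set of size $n$, namely $A=\{v_iw_i : 1\leq i\leq n-1\}\cup\{v_nw_1\}$. As in Lemma~\ref{L:wheel} and the previous theorem, the edges incident to a vertex form a cocircuit, and for $n\geq 3$ these vertex stars are precisely the cocircuits of minimum size $n$, so any $\sigma\in Aut(B(K_{n,n}))$ permutes them. The $A$-degree sequence is asymmetric: every vertex of $V$ has $A$-degree $1$, whereas in $W$ the vertex $w_1$ has degree $2$, $w_n$ has degree $0$, and the rest degree $1$. Since $\sigma$ fixes $A$ setwise and preserves the number of $A$-edges in each star, the unique degree-$2$ star (at $w_1$) and degree-$0$ star (at $w_n$) are fixed. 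Two stars share an edge iff their vertices lie on opposite sides, so $\sigma$ induces an automorphism of the bipartite ``star-intersection'' graph $K_{n,n}$; because the $W$-star at $w_1$ maps to a $W$-star, $\sigma$ cannot interchange the sides, and hence $\sigma(v_iw_j)=v_{\alpha(i)}w_{\beta(j)}$ for permutations $\alpha$ of $V$ and $\beta$ of $W$. Fixing the matching edges forces $\alpha(i)=\beta(i)=i$ for $i\leq n-1$, and then fixing $v_nw_1$ forces $\alpha(n)=n$, whence $\beta(n)=n$; so $\sigma$ is the identity and $A$ is a fixing set.

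The main obstacle is the side-swapping symmetry, the single feature separating this theorem from the $m<n$ case. It appears twice: in the lower bound I must recognize that a near-perfect matching is preserved by a side swap (so $n-1$ edges never suffice), and in the upper bound I must choose $A$ with a degree pattern that genuinely differs between the two sides and then verify, via the star-intersection structure, that fixing $A$ forbids $\sigma$ from interchanging $V$ and $W$. A routine point to nail down (analogous to the argument omitted for Lemma~\ref{l:fixkmn}) is that for $n\geq 3$ the vertex stars really are the unique minimum cocircuits of $B(K_{n,n})$, which is what licenses the claim that $\sigma$ permutes them.
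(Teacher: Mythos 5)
Your proposal is correct, but it runs in the opposite direction from the paper's proof and uses different machinery. The paper handles the cycle matroid first, by citation: for $n>2$, $Aut(M(K_{n,n}))$ is the wreath product $S_n \wr S_2$ in its product action, whose base size is computed in Example 2.16 of Bailey and Cameron \cite{BailCam}; the bicircular statement is then deduced from Theorem~\ref{T:samefix}. You instead prove the bicircular statement directly --- a matching-plus-one-edge fixing set for the upper bound, and the near-perfect-matching/side-swap analysis for the lower bound --- and deduce the cycle statement from Theorem~\ref{T:samefix}. Your route is self-contained and stays within the paper's own cocircuit toolkit (it parallels Lemma~\ref{L:wheel} and the paper's proof of the $m<n$ theorem, where vertex stars are identified among minimum cocircuits), and it produces explicit fixing sets and explicit obstructing automorphisms; the paper's route is much shorter but leans on an external permutation-group result and on knowing the automorphism group exactly. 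Two points in your write-up deserve attention: the claim that for $n\geq 3$ the vertex stars are precisely the minimum cocircuits of $B(K_{n,n})$ is true (a count of boundary and non-tree edges around a candidate tree component of size $s\geq 2$ gives at least $2n-2>n$ edges) but you only flag it rather than prove it --- acceptable here, since the paper makes the same unproved assertion in its $m<n$ argument; and your direct computation of the exceptional case, $M(K_{2,2})\cong U_{3,4}$ and $B(K_{2,2})\cong U_{4,4}$ with fixing number 3 each, is exactly what the paper waves off as ``easy to check,'' so you have actually supplied a detail the paper omits.
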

\begin{proof} The small cases are easy to check. For the cycle matroid $M(K_{m,n})$, the theorem now follows from Example 2.16 of Bailey and Cameron \cite{BailCam}.  In that example, the authors compute the base size for the wreath product $S_n \wr S_2$ as a product action.  This is the automorphism group of $M(K_{n,n})$ when $n>2$, which gives the result.

The bicircular case now follows from Theorem~\ref{T:samefix}.
\end{proof}

It would be interesting to determine the fixing number for the cycle and bicircular matroids associated with other classes of graphs.

\bibliographystyle{plain}
\bibliography{references}

\end{document}